\newtheorem{theorem}{Theorem}
\newtheorem{lemma}{Lemma}
\title{Variable Projection Algorithms: Theoretical Insights and A Novel Approach for Problems with Large Residual}
\author{
  Guangyong Chen, Peng Xue, Min Gan, Wenzhong Guo \\
  College of Computer and Data Science \\
  Fuzhou University \\
  China\\
  \texttt{cgykeda@mail.ustc.edu.cn, xuep802@gmail.com, ganmin@ieee.org, guowenzhong@fzu.edu.cn} \\
   \And
  Jing Chen \\
  School of Science \\
  Jiangnan University \\
  China\\
  \texttt{chenjing1981929@126.com} \\
     \And
  C. L. Philip. Chen \\
  School of Computer Science and Engineering \\
  South China University of Technology \\
  China\\
  \texttt{Philip.Chen@ieee.org} \\
}
\begin{document}
\maketitle

\begin{abstract}
This paper delves into an in-depth exploration of the Variable Projection (VP) algorithm, a powerful tool for solving separable nonlinear optimization problems across multiple domains, including system identification, image processing, and machine learning. We first establish a theoretical framework to examine the effect of the approximate treatment of the coupling relationship among parameters on the local convergence of the VP algorithm and theoretically prove that the Kaufman’s VP algorithm can achieve a similar convergence rate as the Golub \& Pereyra’s form. These studies fill the gap in the existing convergence theory analysis, and provide a solid foundation for understanding the mechanism of VP algorithm and broadening its application horizons. Furthermore, inspired by these theoretical insights, we design a refined VP algorithm, termed VPLR, to address separable nonlinear optimization problems with large residual. This algorithm enhances convergence performance by addressing the coupling relationship between parameters in separable models and continually refining the approximated Hessian matrix to counteract the influence of large residual. The effectiveness of this refined algorithm is corroborated through numerical {experiments}.
\end{abstract}

\section{Introduction}
\label{sec:introduction}
Separable nonlinear models, which are ubiquitous in disciplines such as system identification \cite{saraf2022efficient,chen2022modified,li2021distributed}, image processing \cite{espanol2023variable,lin2017robust,sheng2015constrained,chung2023hybrid}
, signal processing 
\cite{kovacs2019generalized,barbieri1992two}
, and neural networks \cite{kovacs2022vpnet,lederer2022cooperative,newman2021train} are constituted by a linear combination of nonlinear basis functions. Conventionally, they can be delineated as:
\begin{equation}\label{eq:1.1}
f(\boldsymbol{a},\boldsymbol{c};\boldsymbol{x}_i)=\sum\limits_{j=1}^n c_j\phi_j(\boldsymbol{a};\boldsymbol{x}_i)+\epsilon_i,
\end{equation}
where $\boldsymbol c=(c_1,\cdots,c_n)^{\top}$ represents the linear parameter of the model; $\boldsymbol{a}\in \mathcal{R}^k$ is the nonlinear parameter; $\phi_j(\boldsymbol{a};\boldsymbol{x}_i)$ is a nonlinear basis function; $\boldsymbol{x}_i$ is the vector intrinsically associated with the state in the model; and $\epsilon_i$ denotes the noise. Given $m$ pairs of {observed} data $\{(\boldsymbol{x}_i,\boldsymbol{y}_i)\}_{i=1}^m$, the identification of model \eqref{eq:1.1} can be \textcolor{black}{formulated} as an optimization problem:
\begin{equation}\label{eq:1.2}
    \min\limits_{\boldsymbol{a},\boldsymbol{c}} r(\boldsymbol{a},\boldsymbol{c})=\frac{1}{2}\sum\limits_{i=1}^m(y_i-\sum\limits_{j=1}^nc_j\phi_j(\boldsymbol{a};\boldsymbol{x}_i))^2.
\end{equation}
The optimization objective delineated above can be \textcolor{black}{compactly represented} in matrix form as follows:
\begin{equation}\label{eq:1.3}
    \min\limits_{\boldsymbol{a},\boldsymbol{c}} r(\boldsymbol{a},\boldsymbol{c})=\frac{1}{2}\left\|\boldsymbol{y}-\mathbf{\Phi}(\boldsymbol{a})\boldsymbol{c}\right\|^2,
\end{equation}
where $\boldsymbol{y}=(y_1,\cdots,y_m)^{\top}$ signifies the observation vector, and $\left\|\cdot\right\|$ denotes the $L_2$-norm. \textcolor{black}{Unless stated otherwise,} we also use it to refer to the matrix norm induced by the vector $L_2$ norm.

Optimization problem \eqref{eq:1.3} is {referred to as the} \textit{Separable Non-Linear Least Squares} (SNLLS) problem \cite{golub2003separable}. Such problems \textcolor{black}{appear in various forms across} different application domains. For instance, in the field of computer vision, the issue of 3D trajectory reconstruction with missing data \cite{tomasi1992shape,hong2015secrets,hu2023sar} can be formulated as the following optimization problem:
\[
\min_{\mathbf U,\mathbf V}\|\mathbf W \odot (\mathbf{Y} - \mathbf{UV})\|_{\text F}^2,
\]
where \(\mathbf W \) is the marking matrix for missing data, \( \mathbf{Y} \) is the observation matrix, \( \mathbf U \) and \( \mathbf V \) are the low-rank matrices, $\odot$ represents element-wise multiplication, and \(\left\|\cdot\right\|_{\text{F}}\) denotes the Frobenius norm. {In the context of data analysis, the commonly used} sparse principal component analysis \cite{erichson2020sparse,dorabiala2024ensemble,chen2023variable} can be summarized as:
\[\begin{split}
\min_{\mathbf U,\mathbf V}\  &g(\mathbf U,\mathbf V) = \frac{1}{2} \|\mathbf{Y} - \mathbf{Y}\mathbf V\mathbf{U}^\top\|^2_{\text F} + \Psi(\mathbf V) \\& \text{s.t.,} \quad \mathbf{U}^\top \mathbf{U} = \mathbf I.
\end{split}
\]
where \(\Psi(\cdot)\) denotes the sparse regularizer. The resolution of such optimization problem involves various algorithms, including gradient-based optimization methods \cite{victor2022system}, alternating optimization \cite{khatana2023dc,hardt2014understanding,yang2022proximal,kong2021kalman}, joint optimization \cite{1467459}, Wiberg algorithm \cite{wiberg1976computation,okatani2007wiberg}, Majorization-Minimization algorithm \cite{lin2017robust,landeros2023mm}, and the Variable Projection (VP) algorithm \cite{golub2003separable,espanol2023variable,van2021variable}. Due to their inherently non-convex and non-smooth nature, these problems remain \textcolor{black}{inherently} challenging to solve. 

By focusing on the special structure {inherent in} these problems, more efficient algorithms can be \textcolor{black}{developed}. \textcolor{black}{ One such representative algorithm is the Variale Projection (VP) algorithm, originally proposed by Golub \& Pereyra \cite{golub2003separable,golub1973differentiation}.} The VP algorithm partitions the parameters into linear parameters $\boldsymbol{c}$ and nonlinear parameters $\boldsymbol{a}$. For fixed values of $\boldsymbol{a}$, $\boldsymbol{c}$ can be articulated as a function of {$\boldsymbol{a}$} by solving the linear least squares problem, as follows:
\begin{equation}
    \hat{\boldsymbol{c}}{\color{black}(\boldsymbol{a})}=\arg\min\limits_{\boldsymbol{c}}\frac{1}{2}\left\|\boldsymbol{y}-\mathbf{\Phi}(\boldsymbol{a})\boldsymbol{c}\right\|^2=\mathbf{\Phi}^\dagger{\color{black}(\boldsymbol{a})}\boldsymbol{y}.\label{2-1}
\end{equation}
where $\mathbf{\Phi}^\dagger(\boldsymbol{a})$ is the Moore-Penrose inverse of $\mathbf{\Phi}(\boldsymbol{a})$. Substituting \eqref{2-1} into \eqref{eq:1.3} results in a reduced function that only {contains} the nonlinear parameters $\boldsymbol{a}$:
\begin{equation}
 \min\limits_{\boldsymbol{a}}{\color{black} R_2}(\boldsymbol{a})=\frac{1}{2}{\boldsymbol{r}_2}^{\top}\boldsymbol{r}_2=\frac{1}{2}\left\|\boldsymbol{y}-\mathbf{\Phi}{\color{black}(\boldsymbol{a})}\mathbf{\Phi}^\dagger{\color{black}(\boldsymbol{a})}\boldsymbol{y}\right\|^2,\label{2-2}
\end{equation}
where $\boldsymbol{r}_2=(\mathbf{I}-\mathbf{\Phi}(\boldsymbol{a})\mathbf{\Phi}^\dagger(\boldsymbol{a}))\boldsymbol{y}$
is the residual vector. 

The VP algorithm, by solving a convex subproblem, expresses linear parameters as a function of nonlinear parameters, thereby reducing the dimensionality of the parameters. This {enables efficient optimization} in a lower-dimensional parameter space \cite{golub2003separable,zhang2020offline}{, mitigates the ill-conditioning often present in the original problem} \cite{sjoberg1997separable,chen2021insights}, and alleviates the risk of the algorithm oscillating within narrow, elongated valleys or succumbing to sharp local optima \cite{su2023nonmonotone,chen2023variable}. Owing to its efficiency, the VP algorithm has been extensively applied in various fields such as system identification\cite{zeng2017regularized}, signal processing \cite{laadjal2018online,bock2021ecg}, {bundle adjustment \cite{weber2024power,iglesias2023expose}}, and network training \cite{kim2008training,newman2021train}, especially after the publication of comprehensive reviews on the VP algorithm \cite{golub2003separable}, which brought it to the attention of more researchers. 

The success of the VP algorithm hinges on the adept handling of the couplings among the parameters within the model. This pivotal aspect {lies in} the computation of the Jacobian matrix of the reduced objective function \eqref{2-2}.  However, the structure of the reduced function does not necessarily render it less complex than the original objective, especially when designing second-order algorithms, \textcolor{black}{as} computing its Jacobian matrix can be challenging. Fortunately, for SNLLS problems,
Golub \& Pereyara have successfully derived the analytical expression for the Jacobian matrix of the reduced objective:
\begin{equation}
    \mathbf{J}_{\text{GP}}=-\mathbf{P}_{\boldsymbol{\Phi}}^{\perp}D\boldsymbol{\Phi}\boldsymbol{\Phi}^{\dagger}\boldsymbol{y}-(\mathbf{P}_{\boldsymbol{\Phi}}^{\perp}D\boldsymbol{\Phi}\boldsymbol{\Phi}^{\dagger})^\top\boldsymbol{y},\label{jac1}
\end{equation}
where $D\mathbf{\Phi}$ is the Fr\'echet derivative of $\mathbf{\Phi}$, and $\mathbf{P}_\mathbf{\Phi}^\perp=\mathbf{I}-\mathbf{\Phi}\mathbf{\Phi}^\dagger$ is a projection operator that projects a vector into the {orthogonal} complement of the column space of $\mathbf{\Phi}$. \textcolor{black}{For notational simplicity, we denote \( \mathbf{\Phi}(\boldsymbol{a}) \) as \( \mathbf{\Phi} \), omitting the explicit dependence on \(\boldsymbol{a}\)}.

In the realm of practical applications, the task of tackling the intricate coupling relationships among model parameters often presents a complex challenge. To mitigate this, scholars have proposed different simplified forms for the Jacobian matrix. Notably, Kaufman \cite{kaufman1975variable} {introduced} a simplified form under the premise that the second term of the Jacobian matrix \eqref{jac1} has minimal influence on the final outcome: 
\begin{equation}
    \mathbf{J}_{\text{Kau}} = -\mathbf{P}_{\boldsymbol{\Phi}}^{\perp}D\boldsymbol{\Phi}\boldsymbol{\Phi}^{\dagger}\boldsymbol{y}.\label{jac2}
\end{equation}
In \cite{ruano1991new}, Ruano et al. proposed an even more concise form of the Jacobian matrix:
\begin{equation}
    \mathbf{J}_{\text{R}} = -D\boldsymbol{\Phi}\boldsymbol{\Phi}^{\dagger}\boldsymbol{y}.\label{jac3}
\end{equation}
Additionally, Song et al. \cite{song2020secant} have constructed an approximate Jacobian matrix using the secant method and proved the convergence of the secant method-based VP algorithm. Shearer et al. \cite{shearer2013generalization} proposed a method for addressing the approximate coupling relationship between parameters in non-least squares situations. 

The various forms of the Jacobian matrix essentially represent distinct approximations of the coupling between parameters. For example, Chen et al. \cite{chen2021insights} have noted that the Kaufman form of the Jacobian matrix fundamentally employs a first-order approximation when computing the derivative of the linear parameters $\boldsymbol{c}$ with respect to the nonlinear parameters $\boldsymbol{a}$. However, a comprehensive theoretical analysis of the effects of these different approximations (i.e., the various forms of the Jacobian matrix) on the performance of the VP algorithm is conspicuously absent in the existing literature. To the best of our knowledge, only Ruhe \& Wedin \cite{ruhe1980algorithms} compared the convergence of VP algorithms based on different forms of Jacobian matrices from the perspective of asymptotical convergence rates, but their comparison was limited to the spectral radius of the Hessian matrix and applicable only to single-step iterations. A theoretical investigation of this issue is crucial for enhancing our understanding of the intrinsic mechanisms of the VP algorithm and for guiding the development of tailored VP algorithms for a \textcolor{black}{broader} range of separable problems. This paper aims to fill this gap. We first provide a framework for analyzing the impact of approximation processing of the coupling relationship between parameters (i.e., using different forms of approximated Jacobian matrix) on the local convergence of the VP algorithm, and theoretically validates the effectiveness of the Kaufman's simplified VP algorithm, demonstrating that it can achieve a local convergence rate similar to that of the Golub \& Pereyra's form. This {study} provides researchers with a unique perspective for understanding the mechanism of VP algorithm and expanding its application in complex scenarios.

Inspired by the theoretical analysis of the convergence \textcolor{black}{behavior} of VP algorithm, this study investigates a class of separable nonlinear optimization problems with large residual. In such cases, the Hessian matrix, obtained from the Jacobian matrix of the reduced function, exhibits significant deviations, which hinder the convergence of the algorithm. To address \textcolor{black}{this challenge}, we introduce an enhanced VP algorithm, termed VPLR. This innovative approach improves the algorithm’s convergence performance by considering the coupling relationships between model parameters, and recursively refining the approximated Hessian matrix to compensate for the effect of large residual. The effectiveness of this enhanced VP algorithm is validated through various numerical experiments.

The main contributions of this paper are summarized as follows:
\begin{itemize}
\item{We first establish a theoretical framework to scrutinize the influence of the approximate treatment of the coupling relationship among parameters on the local convergence of the VP algorithm; and then theoretically validate the efficacy of the Kaufman's form of the VP algorithm, showing that it can achieve a local convergence rate similar to that of the Golub \& Pereyra VP algorithm. This study also offers a unique perspective for to comprehend the VP algorithm, fills the theoretical gap, and suggests new avenues for its application in a wide range of separable nonlinear optimization problems.}

\item {Building on our theoretical analysis, we {propose} an enhanced VP algorithm, termed as VPLR, {specifically designed to address separable nonlinear optimization problems with large residual.}  The algorithm takes into account the coupling relationships among the parameters, and recursively adjusts the Hessian matrix obtained from the Jacobian matrix of the reduced function to compensate for the effect of large residual, thereby enhancing the convergence performance.}
\end{itemize}

\textcolor{black}{The rest of this paper is as follows. Section \ref{sec:2} presents an analytical framework for assessing the local convergence rate of VP algorithms. In Section \ref{sec:3}, we introduce an enhanced VP algorithm for separable nonlinear problems with large residual. Section \ref{sec:4} demonstrates the efficacy of the proposed algorithm through experiments on both synthetic and real-world datasets. Finally, we summarize the main conclusions of this paper in Section \ref{sec:5}}.

\section{Convergence Analysis of Variable Projection Algorithm}
\label{sec:2}
\textcolor{black}{In this section, we introduce an analytical framework to assess how different forms of Jacobian matrix approximations influence the local convergence of the VP algorithm.} Our theoretical findings reveal that Kaufman's VP algorithm can achieve a local convergence rate comparable to that of Golub \& Pereyra's VP algorithm, thereby filling the existing research gap in this field. Additionally, we present a comparative analysis of the convergence properties of the VP algorithm and joint optimization methods.

To support the subsequent analysis, we first clarify essential symbols. Let ${\boldsymbol{a}}^*$ be a solution of minimization problem \eqref{2-2}, ${\boldsymbol{a}}$ be a feasible point, and $\{\boldsymbol{a}_k\}_{k\ge 0}$ be the sequence of iterates generated by the VP algorithm. We define the error vector as ${\boldsymbol e} = {\boldsymbol{a}} - {\boldsymbol{a}}^*$ and the error at the $n$-th iteration as ${\boldsymbol e}_n = {\boldsymbol{a}}_n - \boldsymbol{a}^*$. Furthermore, {let} $\mathcal{B}({\color{black} \boldsymbol{a}^*},{\color{black} \rho})$ represents the open ball of radius ${\color{black}\rho}$ centered at ${\boldsymbol{a}}^*$, {defined by }
\[\mathcal{B}({\color{black} \boldsymbol{a}^*},{\color{black} \rho})=\{\boldsymbol{a}|\|{\boldsymbol{a}} - {\boldsymbol{a}}^* \|<{\color{black} \rho}\}.\] Throughout this paper, we use $\nabla f(\boldsymbol{a})$ to denote the gradient vector of the objective function $f$, and $\mathbf J$ to denote the Jacobian matrix of the residual vector $\boldsymbol r$.

\subsection{{Different forms of VP algorithms}}
The following two lemmas are fundamental and important for analyzing the local convergence of different forms of VP algorithms.
\begin{lemma}
Suppose that the Jacobian matrix $\mathbf{J}(\boldsymbol{a})$ is Lipschitz continuous on the bounded set $\mathcal{B}({\color{black} \boldsymbol{a}^*},{\color{black} \rho})$, i.e., there exists a constant $L>0$ such that  $\left\|\mathbf{J}(\boldsymbol{a}_1) - \mathbf{J}(\boldsymbol{a}_2)\right\| \le L\left\|\boldsymbol{a}_1 - \boldsymbol{a}_2\right\|$ for any $\boldsymbol{a}_1, \boldsymbol{a}_2 \in \mathcal{B}({\color{black} \boldsymbol{a}^*},{\color{black} \rho})$. Then the corresponding approximate Hessian matrix $\mathbf{H}(\boldsymbol{a}) = \mathbf{J}(\boldsymbol{a})^\top\mathbf{J}(\boldsymbol{a})$ is also Lipschitz continuous on the bounded set $\mathcal{B}({\color{black} \boldsymbol{a}^*},{\color{black} \rho})$.
\label{lemma2.1}
\end{lemma}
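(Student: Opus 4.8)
The plan is to reduce the Lipschitz estimate for the approximate Hessian $\mathbf{H}=\mathbf{J}^\top\mathbf{J}$ to the one assumed for $\mathbf{J}$, via the standard ``add and subtract'' identity together with submultiplicativity of the induced operator norm. First I would write, for arbitrary $\boldsymbol{a}_1,\boldsymbol{a}_2\in\mathcal{B}(\delta)$,
\[
\mathbf{H}(\boldsymbol{a}_1)-\mathbf{H}(\boldsymbol{a}_2)=\mathbf{J}(\boldsymbol{a}_1)^\top\bigl(\mathbf{J}(\boldsymbol{a}_1)-\mathbf{J}(\boldsymbol{a}_2)\bigr)+\bigl(\mathbf{J}(\boldsymbol{a}_1)-\mathbf{J}(\boldsymbol{a}_2)\bigr)^\top\mathbf{J}(\boldsymbol{a}_2),
\]
so that the triangle inequality, $\|\mathbf{A}\mathbf{B}\|\le\|\mathbf{A}\|\,\|\mathbf{B}\|$, and $\|\mathbf{A}^\top\|=\|\mathbf{A}\|$ (valid for the $L_2$-induced norm used throughout the paper) give
\[
\left\|\mathbf{H}(\boldsymbol{a}_1)-\mathbf{H}(\boldsymbol{a}_2)\right\|\le\bigl(\|\mathbf{J}(\boldsymbol{a}_1)\|+\|\mathbf{J}(\boldsymbol{a}_2)\|\bigr)\left\|\mathbf{J}(\boldsymbol{a}_1)-\mathbf{J}(\boldsymbol{a}_2)\right\|.
\]
The Lipschitz hypothesis bounds the last factor by $L\|\boldsymbol{a}_1-\boldsymbol{a}_2\|$, so all that remains is a uniform bound on $\|\mathbf{J}\|$ over $\mathcal{B}(\delta)$.

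The one place where the \emph{boundedness} of $\mathcal{B}(\delta)$ is genuinely needed — and the only step calling for any care — is producing $M:=\sup_{\boldsymbol{a}\in\mathcal{B}(\delta)}\|\mathbf{J}(\boldsymbol{a})\|<\infty$. I would obtain this straight from the Lipschitz property by anchoring at the centre $\boldsymbol{a}^*$: for every $\boldsymbol{a}\in\mathcal{B}(\delta)$,
\[
\|\mathbf{J}(\boldsymbol{a})\|\le\|\mathbf{J}(\boldsymbol{a}^*)\|+L\|\boldsymbol{a}-\boldsymbol{a}^*\|\le\|\mathbf{J}(\boldsymbol{a}^*)\|+L\delta=:M,
\]
which is finite because $\mathbf{J}(\boldsymbol{a}^*)$ is a fixed matrix. (Equivalently, if one prefers to assume $\mathbf{J}$ continuous on the closure of $\mathcal{B}(\delta)$, $M$ is the maximum of a continuous function on a compact set.)

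Combining the two estimates then yields
\[
\left\|\mathbf{H}(\boldsymbol{a}_1)-\mathbf{H}(\boldsymbol{a}_2)\right\|\le 2ML\left\|\boldsymbol{a}_1-\boldsymbol{a}_2\right\|\qquad\text{for all }\boldsymbol{a}_1,\boldsymbol{a}_2\in\mathcal{B}(\delta),
\]
so $\mathbf{H}$ is Lipschitz continuous on $\mathcal{B}(\delta)$ with constant $L_{\mathbf H}=2ML$. I do not anticipate a real obstacle: the argument is elementary, and the only subtlety worth flagging is that one must not assert boundedness of $\|\mathbf{J}\|$ from Lipschitz continuity alone — it is the boundedness of the domain $\mathcal{B}(\delta)$ that makes the uniform bound $M$ legitimate.
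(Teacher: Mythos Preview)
Your proposal is correct and follows essentially the same route as the paper: the same add-and-subtract decomposition of $\mathbf{J}^\top\mathbf{J}$, the same norm inequalities, and the same conclusion with Lipschitz constant $2ML$ (the paper writes $2KL$ with $K=\sup_{\boldsymbol{a}\in\overline{\mathcal{B}(\delta)}}\|\mathbf{J}(\boldsymbol{a})\|$). The only cosmetic difference is that the paper obtains the uniform bound on $\|\mathbf{J}\|$ by continuity on the compact closure, whereas you primarily derive it from the Lipschitz estimate anchored at $\boldsymbol{a}^*$---an alternative you yourself note.
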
\begin{proof}
Consistent with standard assumptions in \cite{kelley1995iterative}, for \(\forall \boldsymbol{a}_1, \boldsymbol{a}_2 \in \mathcal{B}({\color{black} \boldsymbol{a}^*},{\color{black} \rho}) \), Lipschitz continuity ensures that \( \|\mathbf{J}(\boldsymbol{a}_1) - \mathbf{J}(\boldsymbol{a}_2)\| \leq L \|\boldsymbol{a}_1 - \boldsymbol{a}_2\| \), then
\begin{scriptsize} \[
\begin{aligned} 
&\| \mathbf{H}(\boldsymbol{a}_{1}) - \mathbf{H}(\boldsymbol{a}_{2}) \| = \| \mathbf{J}(\boldsymbol{a}_{1})^\top\mathbf{J}(\boldsymbol{a}_{1}) - \mathbf{J}(\boldsymbol{a}_{2})^\top\mathbf{J}(\boldsymbol{a}_{2}) \| \\
=& \| \mathbf{J}(\boldsymbol{a}_{1})^\top\mathbf{J}(\boldsymbol{a}_{1}) - \mathbf{J}(\boldsymbol{a}_{1})^\top\mathbf{J}(\boldsymbol{a}_{2}) + \mathbf{J}(\boldsymbol{a}_{1})^\top\mathbf{J}(\boldsymbol{a}_{2}) - \mathbf{J}(\boldsymbol{a}_{2})^\top\mathbf{J}(\boldsymbol{a}_{2}) \| \\
\leq& \| \mathbf{J}(\boldsymbol{a}_{1})^\top \| \| \mathbf{J}(\boldsymbol{a}_{1}) - \mathbf{J}(\boldsymbol{a}_{2}) \| + \| \mathbf{J}(\boldsymbol{a}_{2}) \| \| \mathbf{J}(\boldsymbol{a}_{1}) - \mathbf{J}(\boldsymbol{a}_{2}) \|.
\end{aligned}
\]\end{scriptsize}
Since the set \(\mathcal{B}({\color{black} \boldsymbol{a}^*},{\color{black} \rho})\) is bounded and  \(\mathbf{J}(\boldsymbol{a})\) is \(L\)-continuous, we can define \(K \overset{\mathrm{def}}{=} \sup\{\|\mathbf{J}(\boldsymbol{a})\|, \boldsymbol{a} \in \overline{\mathcal{B}({\color{black} \boldsymbol{a}^*},{\color{black} \rho})}\} < \infty\). Hence, it follows that \(\| \mathbf{H}(\boldsymbol{a}_{1}) - \mathbf{H}(\boldsymbol{a}_{2}) \| \leq 2 K L\|\boldsymbol{a}_1 - \boldsymbol{a}_2\|\), i.e., $\mathbf{H}(\boldsymbol{a})$ is Lipschitz continuous with Lipschitz constant $2KL$.
\end{proof}
\begin{lemma}
Assume that \textcolor{black}{$\mathbf{H(\boldsymbol{a}^*)}$ is non-singular} and $\mathbf{J}(\boldsymbol{a})$ is Lipschitz continuous on the bounded set $\mathcal{B}({\color{black} \boldsymbol{a}^*},{\color{black} \rho})$,  there exists ${\color{black} 0} < \sigma\leq{\color{black} \rho} $ such that for all $\boldsymbol{a} \in {\color{black}\mathcal{B}}({\color{black}\boldsymbol{a}^*},\sigma)$, the following inequality holds:
\begin{equation}
\begin{aligned} 
\|\mathbf{H}(\boldsymbol{a})\| \leq 2\|\mathbf{H}(\boldsymbol{a}^*)\|,
\label{eq:2.0a}
\end{aligned}
\end{equation}
\begin{equation}
\begin{aligned} 
\|(\mathbf{H}(\boldsymbol{a}))^{-1}\| \leq 2\|(\mathbf{H}(\boldsymbol{a}^*))^{-1}\|,
\label{eq:2.0b}
\end{aligned}
\end{equation}
\begin{equation}
\begin{aligned} 
\|\nabla f(\boldsymbol{a})\| \leq 4\|\mathbf{H}(\boldsymbol{a}^*)\|\|e\|.
\label{eq:2.0c}
\end{aligned}
\end{equation}
\label{lemma2.2}
\end{lemma}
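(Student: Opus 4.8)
The plan is to establish the three bounds \eqref{eq:2.0a}--\eqref{eq:2.0c} separately, each valid once the radius $\delta$ is taken small enough, and then to set $\delta$ equal to the smallest of the resulting thresholds. Throughout I would lean on Lemma~\ref{lemma2.1}, which gives that $\mathbf{H}(\boldsymbol{a}) = \mathbf{J}(\boldsymbol{a})^\top\mathbf{J}(\boldsymbol{a})$ is Lipschitz continuous on $\mathcal{B}(\delta)$ with constant $2KL$, where $K = \sup_{\boldsymbol{a}\in\overline{\mathcal{B}(\delta)}}\|\mathbf{J}(\boldsymbol{a})\|$, together with the identity $\|\mathbf{J}(\boldsymbol{a})\|^2 = \|\mathbf{H}(\boldsymbol{a})\|$ for the spectral norm. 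I would also make explicit the nonsingularity of $\mathbf{H}(\boldsymbol{a}^*)$, i.e. the full-column-rank assumption on $\mathbf{J}(\boldsymbol{a}^*)$ underlying the VP setup.

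For \eqref{eq:2.0a}, the triangle inequality and the Lipschitz bound give
\[
\|\mathbf{H}(\boldsymbol{a})\| \le \|\mathbf{H}(\boldsymbol{a}^*)\| + 2KL\|\boldsymbol{a} - \boldsymbol{a}^*\| \le \|\mathbf{H}(\boldsymbol{a}^*)\| + 2KL\delta,
\]
so any $\delta \le \delta_1 := \|\mathbf{H}(\boldsymbol{a}^*)\|/(2KL)$ pushes the right-hand side below $2\|\mathbf{H}(\boldsymbol{a}^*)\|$. For \eqref{eq:2.0b} I would factor $\mathbf{H}(\boldsymbol{a}) = \mathbf{H}(\boldsymbol{a}^*)\bigl(\mathbf{I} + \mathbf{H}(\boldsymbol{a}^*)^{-1}(\mathbf{H}(\boldsymbol{a}) - \mathbf{H}(\boldsymbol{a}^*))\bigr)$ and apply the Banach (Neumann-series) perturbation lemma: once $\|\mathbf{H}(\boldsymbol{a}^*)^{-1}\|\cdot 2KL\delta \le \tfrac12$, the matrix $\mathbf{H}(\boldsymbol{a})$ is invertible with $\|\mathbf{H}(\boldsymbol{a})^{-1}\| \le \|\mathbf{H}(\boldsymbol{a}^*)^{-1}\|/(1-\tfrac12) = 2\|\mathbf{H}(\boldsymbol{a}^*)^{-1}\|$, which fixes a second threshold $\delta_2 := 1/(4KL\|\mathbf{H}(\boldsymbol{a}^*)^{-1}\|)$.

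The bound \eqref{eq:2.0c} is the delicate step, and I expect it to be the main obstacle. Here I would invoke first-order optimality at the minimizer, $\nabla f(\boldsymbol{a}^*) = \mathbf{J}(\boldsymbol{a}^*)^\top\boldsymbol{r}(\boldsymbol{a}^*) = \mathbf{0}$, so that $\nabla f(\boldsymbol{a}) = \nabla f(\boldsymbol{a}) - \nabla f(\boldsymbol{a}^*)$, and then split with an add-and-subtract step,
\[
\nabla f(\boldsymbol{a}) = \mathbf{J}(\boldsymbol{a})^\top\bigl(\boldsymbol{r}(\boldsymbol{a}) - \boldsymbol{r}(\boldsymbol{a}^*)\bigr) + \bigl(\mathbf{J}(\boldsymbol{a}) - \mathbf{J}(\boldsymbol{a}^*)\bigr)^\top\boldsymbol{r}(\boldsymbol{a}^*).
\]
The first term I would bound via the fundamental theorem of calculus applied to $\boldsymbol{r}$, namely $\|\boldsymbol{r}(\boldsymbol{a}) - \boldsymbol{r}(\boldsymbol{a}^*)\| \le \bigl(\sup_{\mathcal{B}(\delta)}\|\mathbf{J}\|\bigr)\|\boldsymbol{e}\|$, which combined with $\|\mathbf{J}\|^2 = \|\mathbf{H}\| \le 2\|\mathbf{H}(\boldsymbol{a}^*)\|$ from \eqref{eq:2.0a} gives at most $2\|\mathbf{H}(\boldsymbol{a}^*)\|\,\|\boldsymbol{e}\|$; the second term is bounded by $L\|\boldsymbol{r}(\boldsymbol{a}^*)\|\,\|\boldsymbol{e}\|$ using Lipschitz continuity of $\mathbf{J}$. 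The obstacle is precisely this residual cross-term: to reach the stated constant $4\|\mathbf{H}(\boldsymbol{a}^*)\|$ one needs $L\|\boldsymbol{r}(\boldsymbol{a}^*)\| \le 2\|\mathbf{H}(\boldsymbol{a}^*)\|$, a mild residual/curvature condition that holds in the local regime treated here and is exactly what the later VPLR construction aims to relax. With that in place, taking $\delta = \min\{\delta_1,\delta_2\}$ finishes the argument; equivalently one may write $\nabla f(\boldsymbol{a}) = \int_0^1 \nabla^2 f(\boldsymbol{a}^* + t\boldsymbol{e})\,\boldsymbol{e}\,\mathrm{d}t$ and bound $\sup_{\mathcal{B}(\delta)}\|\nabla^2 f\|$ by continuity, arriving at the same residual-dependent constant.
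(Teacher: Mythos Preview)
Your treatment of \eqref{eq:2.0a} and \eqref{eq:2.0b} is essentially identical to the paper's: both invoke the Lipschitz continuity of $\mathbf{H}$ from Lemma~\ref{lemma2.1}, use the triangle inequality for the first bound, and then bound $\|\mathbf{I}-\mathbf{H}(\boldsymbol{a}^*)^{-1}\mathbf{H}(\boldsymbol{a})\|$ and apply Banach's lemma for the second. Your explicit thresholds $\delta_1,\delta_2$ just make the paper's ``choose an appropriate $\delta$'' concrete.

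For \eqref{eq:2.0c} you take a genuinely different route. The paper writes directly
\[
\nabla f(\boldsymbol{a}) \;=\; \mathbf{J}(\boldsymbol{a})^\top\boldsymbol{r}(\boldsymbol{a}) \;=\; \mathbf{J}(\boldsymbol{a})^\top\!\int_{0}^{1}\mathbf{J}(\boldsymbol{a}^*+t\boldsymbol{e})\,\boldsymbol{e}\,dt,
\]
then bounds $\|\mathbf{J}(\boldsymbol{a})\|\le 2\|\mathbf{J}(\boldsymbol{a}^*)\|$ inside and outside the integral to obtain the clean constant $4\|\mathbf{H}(\boldsymbol{a}^*)\|$. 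This integral identity tacitly replaces $\boldsymbol{r}(\boldsymbol{a})$ by $\boldsymbol{r}(\boldsymbol{a})-\boldsymbol{r}(\boldsymbol{a}^*)$, i.e.\ it is exact only in the zero-residual case; the contribution of $\mathbf{J}(\boldsymbol{a})^\top\boldsymbol{r}(\boldsymbol{a}^*)$ is silently dropped. Your add-and-subtract decomposition keeps that term visible and shows that matching the stated constant requires a small-residual condition of the form $L\|\boldsymbol{r}(\boldsymbol{a}^*)\|\le 2\|\mathbf{H}(\boldsymbol{a}^*)\|$. So the paper's argument is shorter and yields the advertised factor $4$ directly, while your argument is more honest about where the residual enters---which, as you note, is precisely the issue the later VPLR construction is designed to address. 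Either way the downstream Theorems~\ref{theorem2.1}--\ref{theorem2.2} only need a bound of the form $\|\nabla f(\boldsymbol{a})\|\le C\|\boldsymbol{e}\|$ for some local constant $C$, so both approaches feed the rest of the paper equally well.
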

\begin{proof}
By Lemma \ref{lemma2.1}, we have that $\mathbf{H}(\boldsymbol{a})$ is Lipschitz continuous, that is, there exists a $\gamma>0$ such that:
\begin{equation} 
\begin{split}
&\|\mathbf{H}(\boldsymbol{a})\| - \|\mathbf{H}(\boldsymbol{a}^*)\| \leq \|\mathbf{H}(\boldsymbol{a}) - \mathbf{H}(\boldsymbol{a}^*)\|  \\ \leq  & \gamma \|\boldsymbol{a} - \boldsymbol{a}^*\| = \gamma \|\boldsymbol{e}\|.
    \label{eq:2.1}
\end{split}
\end{equation}
From \eqref{eq:2.1}, by choosing an appropriate \(\sigma\) such that \( \gamma \|\boldsymbol{e}\| \leq \|\mathbf{H}(\boldsymbol{a}^*)\| \), we obtain that \( \|\mathbf{H}(\boldsymbol{a})\| \leq 2\|\mathbf{H}(\boldsymbol{a}^*)\| \) for any $\boldsymbol{a}\in \mathcal{B}({\color{black} \boldsymbol{a}^*},\sigma)$.

To verify Equation \eqref{eq:2.0b}, we consider
\[
\begin{split}
&\|\mathbf{I} - \mathbf{H}(\boldsymbol{a}^*)^{-1}\mathbf{H}(\boldsymbol{a})\| \\=& \|\mathbf{H}(\boldsymbol{a}^*)^{-1}(\mathbf{H}(\boldsymbol{a}^*) - \mathbf{H}(\boldsymbol{a}))\| \\\leq& \gamma \|\mathbf{H}(\boldsymbol{a}^*)^{-1}\| \|\boldsymbol{e}\| \\\leq& \gamma\sigma\|\mathbf{H}(\boldsymbol{a}^*)^{-1}\|.
\end{split}
\]
Hence, by choosing \( {\color{black} \sigma =\min( \frac{\|\mathbf{H}(\boldsymbol{a}^*)^{-1}\|^{-1}}{2\gamma} ,\rho)}\), we deduce that:
\[
\|\mathbf{I} - \mathbf{H}(\boldsymbol{a}^*)^{-1}\mathbf{H}(\boldsymbol{a})\| < \frac{1}{2}.
\]
Based on this, we apply Banach’s Lemma \cite{kelley1995iterative}, which implies that: 
\[
\|(\mathbf{H}(\boldsymbol{a}))^{-1}\| \leq 2\|(\mathbf{H}(\boldsymbol{a}^*))^{-1}\|.
\]
Since \( \mathbf{J}(\boldsymbol{a}) \) is Lipschitz continuous,  by a similar argument as above, we can find a ${\color{black} 0} < \sigma\leq{\color{black} \rho} $, within \( \mathcal{B}({\color{black} \boldsymbol{a}^*},\sigma) \), such that $\|\mathbf{J}(\boldsymbol{a})^\top\| \leq 2\|\mathbf{J}(\boldsymbol{a}^*)^\top\|$, $\|\mathbf{J}(\boldsymbol{a})\| \leq 2\|\mathbf{J}(\boldsymbol{a}^*)\|,
$ and the gradient \( \nabla f(\boldsymbol{a}) = \mathbf{J}(\boldsymbol{a})^\top  \boldsymbol{r}(\boldsymbol{a}) = \mathbf{J}(\boldsymbol{a})^\top \int_{0}^{1} \mathbf{J}(\boldsymbol{a}^* + t\boldsymbol{e})\boldsymbol{e}\, dt \) ,  then we have
\[\begin{aligned}
\|\nabla f(\boldsymbol{a})\|&\leq\|\mathbf{J}(\boldsymbol{a})^\top\|\int_{0}^{1}\|\mathbf{J}(\boldsymbol{a}^*+t\boldsymbol{e})\|\|\boldsymbol{e}\|dt  \\
&\leq2\|\mathbf{J}(\boldsymbol{a}^*)^\top\|\int_{0}^{1}2\|\mathbf{J}(\boldsymbol{a}^*)\|\|\boldsymbol{e}\|dt \\
&=4\|\mathbf{J}(\boldsymbol{a}^*)^\top \mathbf{J}(\boldsymbol{a}^*)\|\|\boldsymbol{e}\| \\
&=4\|\mathbf{H}(\boldsymbol{a}^*)\|\|\boldsymbol{e}\|.
\end{aligned}\]
\end{proof}
We now proceed with a local convergence analysis of the VP algorithm {applied} to  the reduced objective function \eqref{2-2}.
\begin{theorem}
   Assume that \( \boldsymbol{a}^* \) is an optimal solution of the reduced function \( {\color{black} R_2}(\boldsymbol{a}) \), and the Jacobian matrix of the residual vector $\boldsymbol{r}_2(\boldsymbol{a})$ is Lipschitz continuous and has full column rank at point \( \boldsymbol{a}^* \). Then there exist \( K > 0 \) and \( {\color{black} 0} < \sigma\leq{\color{black} \rho}  \) such that for \( \boldsymbol{a} \in \mathcal{B}({\color{black} \boldsymbol{a}^*},\sigma) \), the iterates  of the Golub $\&$ Pereyra's VP algorithm satisfy \( \|\boldsymbol{e}_{i+1}\| \le K(\|\boldsymbol{e}_{i}\|^2 + \|\nabla^2 {\color{black} R_2}(\boldsymbol{a}^*) - \mathbf{J}^\top_{\text{GP}}(\boldsymbol{a}^*) \mathbf{J}_{\text{GP}}(\boldsymbol{a}^*)\| \|\boldsymbol{e}_{i}\|) \).
    \label{theorem2.1}
\end{theorem}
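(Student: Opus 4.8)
The plan is to recognise the Golub \& Pereyra VP iteration as the Gauss--Newton method for the reduced least-squares problem $\min_{\boldsymbol{a}} r_2(\boldsymbol{a})=\frac12\|\boldsymbol{r}_2(\boldsymbol{a})\|^2$ with $\boldsymbol{r}_2(\boldsymbol{a})=\mathbf{P}_{\boldsymbol{\Phi}}^{\perp}\boldsymbol{y}$. Since $\mathbf{J}_{\text{GP}}$ is exactly the Fr\'echet derivative of $\boldsymbol{r}_2$ (this is the content of Golub \& Pereyra's differentiation formula), the step reads $\boldsymbol{a}_{i+1}=\boldsymbol{a}_i-\mathbf{H}_{\text{GP}}(\boldsymbol{a}_i)^{-1}\nabla r_2(\boldsymbol{a}_i)$ with $\mathbf{H}_{\text{GP}}(\boldsymbol{a})=\mathbf{J}_{\text{GP}}(\boldsymbol{a})^\top\mathbf{J}_{\text{GP}}(\boldsymbol{a})$ the Gauss--Newton model of the Hessian and $\nabla r_2(\boldsymbol{a})=\mathbf{J}_{\text{GP}}(\boldsymbol{a})^\top\boldsymbol{r}_2(\boldsymbol{a})$. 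Because $\mathbf{J}_{\text{GP}}$ has full column rank at $\boldsymbol{a}^*$, $\mathbf{H}_{\text{GP}}(\boldsymbol{a}^*)$ is nonsingular; applying Lemmas~\ref{lemma2.1} and~\ref{lemma2.2} with $\mathbf{J}=\mathbf{J}_{\text{GP}}$ then produces a radius $\delta$ on which $\mathbf{H}_{\text{GP}}(\boldsymbol{a})$ stays nonsingular with $\|\mathbf{H}_{\text{GP}}(\boldsymbol{a})^{-1}\|\le 2\|\mathbf{H}_{\text{GP}}(\boldsymbol{a}^*)^{-1}\|$, so the iteration is well defined and all the auxiliary norm bounds hold throughout $\mathcal{B}(\delta)$. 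The statement is a per-step estimate valid for any current iterate $\boldsymbol{a}_i\in\mathcal{B}(\delta)$, so I do not need to argue separately that the whole sequence stays in the ball.

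Next I would derive the error recursion. Subtracting $\boldsymbol{a}^*$, using first-order optimality $\nabla r_2(\boldsymbol{a}^*)=\boldsymbol{0}$ and the integral Taylor identity $\nabla r_2(\boldsymbol{a}_i)=\int_0^1\nabla^2 r_2(\boldsymbol{a}^*+t\boldsymbol{e}_i)\boldsymbol{e}_i\,dt$, one obtains
\[
\boldsymbol{e}_{i+1}=\mathbf{H}_{\text{GP}}(\boldsymbol{a}_i)^{-1}\int_0^1\bigl[\mathbf{H}_{\text{GP}}(\boldsymbol{a}_i)-\nabla^2 r_2(\boldsymbol{a}^*+t\boldsymbol{e}_i)\bigr]\boldsymbol{e}_i\,dt .
\]
Inserting $\pm\mathbf{H}_{\text{GP}}(\boldsymbol{a}^*)$ and $\pm\nabla^2 r_2(\boldsymbol{a}^*)$ breaks the bracket into three operators: (I) $\mathbf{H}_{\text{GP}}(\boldsymbol{a}_i)-\mathbf{H}_{\text{GP}}(\boldsymbol{a}^*)$, (II) $\mathbf{H}_{\text{GP}}(\boldsymbol{a}^*)-\nabla^2 r_2(\boldsymbol{a}^*)$, and (III) $\nabla^2 r_2(\boldsymbol{a}^*)-\nabla^2 r_2(\boldsymbol{a}^*+t\boldsymbol{e}_i)$. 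Term (I) is $O(\|\boldsymbol{e}_i\|)$ by Lemma~\ref{lemma2.1} (Lipschitz continuity of $\mathbf{H}_{\text{GP}}$), so it contributes $O(\|\boldsymbol{e}_i\|^2)$ after integrating against $\boldsymbol{e}_i$; term (III) is $O(\|t\boldsymbol{e}_i\|)=O(\|\boldsymbol{e}_i\|)$ provided $\nabla^2 r_2$ is locally Lipschitz on $\mathcal{B}(\delta)$, hence also $O(\|\boldsymbol{e}_i\|^2)$; and term (II) is the fixed matrix $\nabla^2 r_2(\boldsymbol{a}^*)-\mathbf{J}^\top_{\text{GP}}(\boldsymbol{a}^*)\mathbf{J}_{\text{GP}}(\boldsymbol{a}^*)$, independent of $t$, contributing $\|\nabla^2 r_2(\boldsymbol{a}^*)-\mathbf{J}^\top_{\text{GP}}(\boldsymbol{a}^*)\mathbf{J}_{\text{GP}}(\boldsymbol{a}^*)\|\,\|\boldsymbol{e}_i\|$. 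Multiplying through by $\|\mathbf{H}_{\text{GP}}(\boldsymbol{a}_i)^{-1}\|\le 2\|\mathbf{H}_{\text{GP}}(\boldsymbol{a}^*)^{-1}\|$ and collecting the Lipschitz moduli and the uniform bound on $\|\mathbf{H}_{\text{GP}}(\boldsymbol{a}^*)^{-1}\|$ into a single constant $K$ (shrinking $\delta$ if needed) gives the claimed inequality.

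The algebra of the splitting and the estimates for (I) and (II) are routine once Lemmas~\ref{lemma2.1}--\ref{lemma2.2} are available; the one genuinely delicate point is term (III), the variation of the \emph{exact} reduced Hessian $\nabla^2 r_2$, because the hypotheses literally grant only Lipschitz continuity of the Jacobian $\mathbf{J}_{\text{GP}}$. I would dispatch it either by noting that $\boldsymbol{r}_2=\mathbf{P}_{\boldsymbol{\Phi}}^{\perp}\boldsymbol{y}$ inherits enough regularity from $\boldsymbol{\Phi}$ and its pseudoinverse for $\nabla^2 r_2$ to be locally Lipschitz, or by an alternative derivation that avoids second derivatives entirely: expand the residual itself to first order, $\boldsymbol{r}_2(\boldsymbol{a}_i)=\boldsymbol{r}_2(\boldsymbol{a}^*)+\int_0^1\mathbf{J}_{\text{GP}}(\boldsymbol{a}^*+t\boldsymbol{e}_i)\boldsymbol{e}_i\,dt$, substitute it into $\nabla r_2(\boldsymbol{a}_i)=\mathbf{J}_{\text{GP}}(\boldsymbol{a}_i)^\top\boldsymbol{r}_2(\boldsymbol{a}_i)$, and use $\mathbf{J}_{\text{GP}}(\boldsymbol{a}^*)^\top\boldsymbol{r}_2(\boldsymbol{a}^*)=\boldsymbol{0}$; that route uses only the Lipschitzness of $\mathbf{J}_{\text{GP}}$ and yields the same structure, with the residual magnitude $\|\boldsymbol{r}_2(\boldsymbol{a}^*)\|$ playing the role of the Gauss--Newton model defect $\|\nabla^2 r_2(\boldsymbol{a}^*)-\mathbf{J}^\top_{\text{GP}}(\boldsymbol{a}^*)\mathbf{J}_{\text{GP}}(\boldsymbol{a}^*)\|$. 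In either case the conclusion has the same shape --- a quadratically contracting term arising from Lipschitz continuity plus a linearly contracting term whose coefficient measures the mismatch at $\boldsymbol{a}^*$ between the true Hessian and its Gauss--Newton surrogate --- and this coefficient is small exactly in the small-residual, low-curvature regime, which is precisely what motivates the large-residual correction developed later in the paper.
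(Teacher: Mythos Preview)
Your proposal is correct and the overall architecture---recognise the iteration as Gauss--Newton, invoke Lemmas~\ref{lemma2.1}--\ref{lemma2.2} to secure nonsingularity and uniform bounds on $\mathcal{B}(\delta)$, then derive a one-step error recursion that separates a quadratic term from a linear term governed by the Gauss--Newton model defect---matches the paper. The decomposition you use is different, however. You work at the level of the \emph{scalar} reduced function, writing $\nabla r_2(\boldsymbol{a}_i)=\int_0^1\nabla^2 r_2(\boldsymbol{a}^*+t\boldsymbol{e}_i)\boldsymbol{e}_i\,dt$ and then splitting $\mathbf{H}_{\text{GP}}(\boldsymbol{a}_i)-\nabla^2 r_2(\cdot)$ into three pieces; this delivers the exact constant $\|\nabla^2 r_2(\boldsymbol{a}^*)-\mathbf{H}_{\text{GP}}(\boldsymbol{a}^*)\|$ directly as your term~(II), at the price of needing local Lipschitzness of $\nabla^2 r_2$ for term~(III). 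The paper instead works at the level of the \emph{residual vector}: it rewrites $\boldsymbol{e}_{i+1}=\mathbf{H}_{\text{GP}}(\boldsymbol{a}_i)^{-1}\mathbf{J}_{\text{GP}}(\boldsymbol{a}_i)^\top\bigl(\mathbf{J}_{\text{GP}}(\boldsymbol{a}_i)\boldsymbol{e}_i-\boldsymbol{r}_2(\boldsymbol{a}_i)\bigr)$, inserts $\pm\boldsymbol{r}_2(\boldsymbol{a}^*)$, bounds $\mathbf{J}_{\text{GP}}(\boldsymbol{a}_i)\boldsymbol{e}_i-(\boldsymbol{r}_2(\boldsymbol{a}_i)-\boldsymbol{r}_2(\boldsymbol{a}^*))$ by Lipschitzness of $\mathbf{J}_{\text{GP}}$ alone, and then Taylor-expands $\mathbf{J}_{\text{GP}}(\boldsymbol{a}_i)$ about $\boldsymbol{a}^*$ and uses the identity $(\mathbf{J}_{\text{GP}}'(\boldsymbol{a}^*))^\top\boldsymbol{r}_2(\boldsymbol{a}^*)=\nabla^2 r_2(\boldsymbol{a}^*)-\mathbf{H}_{\text{GP}}(\boldsymbol{a}^*)$ together with stationarity to isolate the defect term. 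Your ``alternative derivation'' is essentially this route; note, though, that as you describe it you would end up with $L\|\boldsymbol{r}_2(\boldsymbol{a}^*)\|$ as the linear coefficient rather than $\|\nabla^2 r_2(\boldsymbol{a}^*)-\mathbf{H}_{\text{GP}}(\boldsymbol{a}^*)\|$---to recover the \emph{exact} constant in the statement you still need the one extra step of expanding $\mathbf{J}_{\text{GP}}$ to first order and invoking that Hessian identity, which is precisely what the paper does.
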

\begin{proof}
    Consider the reduced SNLLS problem \eqref{2-2}, Golub \& Pereyra \cite{golub2003separable} derived the analytical expression for the Jacobian matrix {as follows}:
\begin{equation} 
\begin{split}\mathbf{J}_{\text{GP}}(\boldsymbol{a}) = -\mathbf{P}_{\boldsymbol{\Phi}}^{\perp}D\boldsymbol{\Phi}\boldsymbol{\Phi}^{\dagger}\boldsymbol{y} - (\mathbf{P}_{\boldsymbol{\Phi}}^{\perp}D\boldsymbol{\Phi}\boldsymbol{\Phi}^{\dagger})^\top\boldsymbol{y}. \label{eq:2.2}
\end{split}
\end{equation}
By the assumption that \( \mathbf{J}_{\text{GP}}(\boldsymbol{a}) \) is {of} full column rank at point \( \boldsymbol{a}^* \), we have \(\mathbf{J}^\top_{\text{GP}}(\boldsymbol{a}^*) \mathbf{J}_{\text{GP}}(\boldsymbol{a}^*) \) is nonsingular. According to Lemma \ref{lemma2.2}, there exists a \( {\color{black} 0} < \sigma\leq{\color{black} \rho} \) such that the matrix \( \mathbf{H}_{\text{GP}}(\boldsymbol{a}) = \mathbf{J}^\top_{\text{GP}}(\boldsymbol{a}) \mathbf{J}_{\text{GP}}(\boldsymbol{a}) \) is nonsingular for all {\(\boldsymbol{a}\in \mathcal{B}({\color{black} \boldsymbol{a}^*},\sigma) \)}. The iteration formula of the Golub \& Pereyra's VP algorithm is given by 
\[ \boldsymbol{a}_{i+1} = \boldsymbol{a}_{i} - (\mathbf{H}_{\text{GP}}(\boldsymbol{a}_{i}))^{-1} \mathbf{J}^\top_{\text{GP}}(\boldsymbol{a}_{i})  \boldsymbol{r}_{2}(\boldsymbol{a}_{i}), \]
from which we obtain:
\begin{equation} 
\begin{split}
&\boldsymbol{e}_{i+1} =\boldsymbol{e}_{i}-(\mathbf{H}_{\text{GP}}(\boldsymbol{a}_{i}))^{-1}\mathbf{J}^\top_{\text{GP}}(\boldsymbol{a}_{i}) \boldsymbol{r}_2(\boldsymbol{a}_{i})  \\
=&(\mathbf{H}_{\text{GP}}(\boldsymbol{a}_{i}))^{-1}\mathbf{J}^\top_{\text{GP}}(\boldsymbol{a}_{i})(\mathbf{J}_{\text{GP}}(\boldsymbol{a}_{i})\boldsymbol{e}_{i}- \boldsymbol{r}_2(\boldsymbol{a}_{i})),
\label{eq:2.3}
\end{split}
\end{equation}
where
\[
\begin{split}
&\mathbf{J}_{\text{GP}}(\boldsymbol{a}_{i})\boldsymbol{e}_{i}- \boldsymbol{r}_2(\boldsymbol{a}_{i})\\=&\mathbf{J}_{\text{GP}}(\boldsymbol{a}_{i})\boldsymbol{e}_{i}- \boldsymbol{r}_2(\boldsymbol{a}^*)+ \boldsymbol{r}_2(\boldsymbol{a}^*)- \boldsymbol{r}_2(\boldsymbol{a}_{i})\\=&- \boldsymbol{r}_2(\boldsymbol{a}^*)+(\mathbf{J}_{\text{GP}}(\boldsymbol{a}_{i})\boldsymbol{e}_{i}-( \boldsymbol{r}_2(\boldsymbol{a}_{i})- \boldsymbol{r}_2(\boldsymbol{a}^*))).
\end{split}
\]
Using the relation \(  \boldsymbol{r}_{2}(\boldsymbol{a}_{i}) -  \boldsymbol{r}_{2}(\boldsymbol{a}^*) = \mathbf{J}_{\text{GP}}(\boldsymbol{a}^*)\boldsymbol{e}_{i} + \mathcal{O}(\|\boldsymbol{e}_{i}\|^2) \), we have that 
\begin{equation} 
\begin{split}
    &\|\mathbf{J}_{\text{GP}}(\boldsymbol{a}_{i})\boldsymbol{e}_{i} - ( \boldsymbol{r}_2(\boldsymbol{a}_{i}) -  \boldsymbol{r}_2(\boldsymbol{a}^*))\| \\\leq& \|\mathbf{J}_{\text{GP}}(\boldsymbol{a}_{i}) - \mathbf{J}_{\text{GP}}(\boldsymbol{a}_{*})\| \|\boldsymbol{e}_{i}\| + \mathcal{O}(\|\boldsymbol{e}_{i}\|^2) \\\leq& L\|\boldsymbol{e}_{i}\|^2. \label{eq:2.4}
\end{split}\end{equation}
Expanding \( \mathbf{J}_{\text{GP}} \) in a Taylor series around $\boldsymbol{a}^*$, we get 
\[ \mathbf{J}_{\text{GP}}(\boldsymbol{a}_{i}) = \mathbf{J}_{\text{GP}}(\boldsymbol{a}^{*}) + \mathbf{J}_{\text{GP}}^{\prime}(\boldsymbol{a}^*)\boldsymbol{e}_{i} + \mathcal{O}(\|\boldsymbol{e}_{i}\|^2). \]
Since \( \boldsymbol{a}^* \) is a stationary point, we have that \[ \mathbf{J}_{\text{GP}}(\boldsymbol{a}^{*})^\top  \boldsymbol{r}_2(\boldsymbol{a}^*) = 0, \] and thus 
\begin{equation} 
\begin{split}
&\mathbf{J}^\top_{\text{GP}}(\boldsymbol{a}_{i})  \boldsymbol{r}_{2}(\boldsymbol{a}^*)\\ =& \boldsymbol{e}_{i}^\top \mathbf{J}_{\text{GP}}^{\prime}(\boldsymbol{a}^*) \boldsymbol{r}_2(\boldsymbol{a}^{*}) + \mathcal{O}(\|\boldsymbol{e}_{i}\|^2) \\=& \boldsymbol{e}_{i}^\top(\nabla^2 {\color{black} R_2}(\boldsymbol{a}^*)-\mathbf{H}_{\text{GP}}(\boldsymbol{a}^*)) + \mathcal{O}(\|\boldsymbol{e}_{i}\|^2). \label{eq:2.5}\end{split}\end{equation}
 Substituting \eqref{eq:2.4} and \eqref{eq:2.5} into \eqref{eq:2.3}, we \textcolor{black}{obtain} 
 \[ \begin{split}
\|\boldsymbol{e}_{i+1}\| &\le L\|( \mathbf{H}_{\text{GP}}(\boldsymbol{a}_{i}))^{-1}\| \|\mathbf{J}^\top_{\text{GP}}(\boldsymbol{a}_{i})\|\|\boldsymbol{e}_{i}\|^2 \\&+\|( \mathbf{H}_{\text{GP}}(\boldsymbol{a}_{i}))^{-1}\| \|\nabla^2 {\color{black} R_2}(\boldsymbol{a}^*)-\mathbf{H}_{\text{GP}}(\boldsymbol{a}^*)\|\|\boldsymbol{e}_{i}\| \\&+ \mathcal{O}(\|\boldsymbol{e}_{i}\|^2). \end{split}\]
By choosing an appropriate {constant} \[ K = \sup_{\boldsymbol{a}_{i} \in \overline{\mathcal{B}({\color{black} \boldsymbol{a}^*},\sigma)}} \|( \mathbf{H}_{\text{GP}}(\boldsymbol{a}_{i}))^{-1}\| (1+L\|\mathbf{J}^\top_{\text{GP}}(\boldsymbol{a}_{i})\|), \] we conclude that \[ \|\boldsymbol{e}_{i+1}\| \le K(\|\boldsymbol{e}_{i}\|^2 + \|\nabla^2{\color{black} R_2}(\boldsymbol{a}^*)-\mathbf{H}_{\text{GP}}(\boldsymbol{a}^*)\|\|\boldsymbol{e}_{i}\|).\]
\end{proof}
\textbf{\it {Remark}.1}: 
    For problems with low nonlinearity, such as low-rank matrix decomposition problems, the difference between \(\nabla^2 {\color{black} R_2}(\boldsymbol{a})\) and \(\mathbf{H}_{\text{GP}}(\boldsymbol{a})\) is often negligible. In these cases, the Golub \& Pereyra's VP algorithm can achieve superlinear convergence rates, and even quadratic convergence rates under certain conditions. 

    Next, we shift our focus to the local convergence rate of Kaufman’s VP algorithm. The Jacobian matrix proposed by Kaufman \cite{kaufman1975variable} simplifies the expression of \eqref{eq:2.2} by {omitting} the second term: 
    \begin{equation}    
     \mathbf{J}_{\text{Kau}}(\boldsymbol{a}) = -\mathbf{P}_{\boldsymbol{\Phi}}^{\perp}D\boldsymbol{\Phi}\boldsymbol{\Phi}^{\dagger}\boldsymbol{y}.
     \label{eq:2.5.5}
    \end{equation}
Letting \( \mathbf{H}_{\text{Kau}}(\boldsymbol{a}) = \mathbf{J}^\top_{\text{Kau}}(\boldsymbol{a}) \mathbf{J}_{\text{Kau}}(\boldsymbol{a}) \), {we can express Kaufman's Jacobian matrix as} \( \mathbf{J}_{\text{Kau}}(\boldsymbol{a}) = \mathbf{J}_{\text{GP}}(\boldsymbol{a}) + \delta(\boldsymbol{a}) \), where \( \delta(\boldsymbol{a}) = (\mathbf{P}_{\boldsymbol{\Phi}}^{\perp}D\boldsymbol{\Phi}\boldsymbol{\Phi}^{\dagger})^\top\boldsymbol{y} \). Consequently, the corresponding approximated Hessian matrix can be articulated as:
\[\begin{split}
 \mathbf{H}_{\text{Kau}}(\boldsymbol{a}) &= (\mathbf{J}_{\text{GP}}(\boldsymbol{a}) + \delta(\boldsymbol{a}))^\top (\mathbf{J}_{\text{GP}}(\boldsymbol{a}) + \delta(\boldsymbol{a})) \\&= \mathbf{J}^\top_{\text{GP}}(\boldsymbol{a})\mathbf{J}_{\text{GP}}(\boldsymbol{a}) + \delta(\boldsymbol{a})^\top \mathbf{J}_{\text{GP}}(\boldsymbol{a}) \\&\ \ \ +\mathbf{J}^\top_{\text{GP}}(\boldsymbol{a}) \delta(\boldsymbol{a}) + \delta(\boldsymbol{a})^\top \delta(\boldsymbol{a}) \\&\ \dot{=} \mathbf{H}_{\text{GP}}(\boldsymbol{a}) + \mathbf\Delta(\boldsymbol{a}). \end{split}\]
\begin{theorem}
Let \({\color{black} R_2}(\boldsymbol{a}) \) denote a reduced function that is second-order Lipschitz continuously differentiable. Assume that \( \boldsymbol{a}^* \) is a critical point and that the Hessian matrix \(\nabla^2 {\color{black} R_2}(\boldsymbol{a}^*) \) is positive definite. Within a bounded open neighborhood of \( \boldsymbol{a}^* \), the Jacobian matrices \( \mathbf{J}_{\text{GP}} \) and \( \mathbf{J}_{\text{Kau}} \) are Lipschitz continuous, and  \(\mathbf\Delta(\boldsymbol{a}) \) satisfies \( \| \mathbf\Delta(\boldsymbol{a})\| \leq \|\mathbf{H}^{-1}_{\text{GP}}(\boldsymbol{a}^*)\|^{-1}/4 \). Then, there exist positive {constants} \( K \), \( \tau_1 \), and \( \tau_2 \), such that the error after updating the nonlinear parameters by Kaufman's VP algorithm satisfies the inequality: \[\begin{split}
\|\boldsymbol{e}_{i+1}^{\text{Kau}}\| &\le K(\|\boldsymbol{e}_{i}\|^2 + \|\nabla^2 {\color{black} R_2}(\boldsymbol{a}^*) - \mathbf{H}_{\text{GP}}(\boldsymbol{a}^*)\| \|\boldsymbol{e}_{i}\|) \\& \ \ \ + 16K \|\mathbf{H}_{\text{GP}}^{-1}(\boldsymbol{a}^*)\| \|\nabla^2 {\color{black} R_2}(\boldsymbol{a}^*)\| \|\mathbf\Delta(\boldsymbol{a}_{i})\| \|\boldsymbol{e}_{i}\| \\&\dot{=} K(\|\boldsymbol{e}_{i}\|^2 + \tau_1 \|\boldsymbol{e}_{i}\| + \tau_2 \|\mathbf\Delta(\boldsymbol{a}_{i})\| \|\boldsymbol{e}_{i}\|).\end{split}\]
\label{theorem2.2}
\end{theorem}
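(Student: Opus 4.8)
The plan is to leverage Theorem~\ref{theorem2.1} by writing the Kaufman step as the Golub~\&~Pereyra step plus a correction that is controlled entirely by the Hessian perturbation $\mathbf\Delta$. The structural fact that makes this decomposition clean is that Kaufman's simplification alters the approximate Hessian but \emph{not} the gradient: since $\boldsymbol{r}_2(\boldsymbol{a}) = \mathbf{P}_{\boldsymbol{\Phi}}^{\perp}\boldsymbol{y}$ and $\boldsymbol{\Phi}^{\dagger}\mathbf{P}_{\boldsymbol{\Phi}}^{\perp} = \boldsymbol{\Phi}^{\dagger} - \boldsymbol{\Phi}^{\dagger}\boldsymbol{\Phi}\boldsymbol{\Phi}^{\dagger} = \mathbf{0}$, one has $\delta(\boldsymbol{a})^\top\boldsymbol{r}_2(\boldsymbol{a}) = \mathbf{0}$, hence $\mathbf{J}_{\text{Kau}}^\top(\boldsymbol{a})\boldsymbol{r}_2(\boldsymbol{a}) = \mathbf{J}_{\text{GP}}^\top(\boldsymbol{a})\boldsymbol{r}_2(\boldsymbol{a}) = \nabla r_2(\boldsymbol{a})$. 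Thus the Kaufman iteration is $\boldsymbol{a}_{i+1} = \boldsymbol{a}_i - (\mathbf{H}_{\text{Kau}}(\boldsymbol{a}_i))^{-1}\nabla r_2(\boldsymbol{a}_i)$, and subtracting the corresponding GP update (applied to the same iterate $\boldsymbol{a}_i$) gives
\[
\boldsymbol{e}_{i+1}^{\text{Kau}} = \boldsymbol{e}_{i+1}^{\text{GP}} + \bigl((\mathbf{H}_{\text{GP}}(\boldsymbol{a}_i))^{-1} - (\mathbf{H}_{\text{Kau}}(\boldsymbol{a}_i))^{-1}\bigr)\nabla r_2(\boldsymbol{a}_i),
\]
where $\boldsymbol{e}_{i+1}^{\text{GP}}$ is the error the GP step would produce from $\boldsymbol{a}_i$. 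The hypothesis already involves $\|\mathbf{H}_{\text{GP}}^{-1}(\boldsymbol{a}^*)\|^{-1}$, so $\mathbf{H}_{\text{GP}}(\boldsymbol{a}^*)$ is nonsingular (equivalently $\mathbf{J}_{\text{GP}}(\boldsymbol{a}^*)$ has full column rank), and Theorem~\ref{theorem2.1} bounds the first term by $K(\|\boldsymbol{e}_{i}\|^2 + \|\nabla^2 r_2(\boldsymbol{a}^*) - \mathbf{H}_{\text{GP}}(\boldsymbol{a}^*)\|\|\boldsymbol{e}_{i}\|)$ on a ball $\mathcal{B}(\delta_1)$.

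For the correction term I would apply the resolvent identity $(\mathbf{H}_{\text{GP}})^{-1} - (\mathbf{H}_{\text{Kau}})^{-1} = (\mathbf{H}_{\text{GP}})^{-1}(\mathbf{H}_{\text{Kau}} - \mathbf{H}_{\text{GP}})(\mathbf{H}_{\text{Kau}})^{-1} = (\mathbf{H}_{\text{GP}})^{-1}\mathbf\Delta\,(\mathbf{H}_{\text{Kau}})^{-1}$, yielding
\[
\|\boldsymbol{e}_{i+1}^{\text{Kau}} - \boldsymbol{e}_{i+1}^{\text{GP}}\| \le \|(\mathbf{H}_{\text{GP}}(\boldsymbol{a}_i))^{-1}\|\,\|\mathbf\Delta(\boldsymbol{a}_i)\|\,\|(\mathbf{H}_{\text{Kau}}(\boldsymbol{a}_i))^{-1}\|\,\|\nabla r_2(\boldsymbol{a}_i)\|.
\]
Three ingredients then get assembled on a (possibly shrunk) ball. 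First, Lemma~\ref{lemma2.2}(\ref{eq:2.0b}) applied to $\mathbf{H}_{\text{GP}}$ gives $\|(\mathbf{H}_{\text{GP}}(\boldsymbol{a}_i))^{-1}\| \le 2\|(\mathbf{H}_{\text{GP}}(\boldsymbol{a}^*))^{-1}\|$. Second, writing $\mathbf{H}_{\text{Kau}}(\boldsymbol{a}_i) = \mathbf{H}_{\text{GP}}(\boldsymbol{a}_i)\bigl(\mathbf{I} + (\mathbf{H}_{\text{GP}}(\boldsymbol{a}_i))^{-1}\mathbf\Delta(\boldsymbol{a}_i)\bigr)$ and using $\|(\mathbf{H}_{\text{GP}}(\boldsymbol{a}_i))^{-1}\mathbf\Delta(\boldsymbol{a}_i)\| \le 2\|(\mathbf{H}_{\text{GP}}(\boldsymbol{a}^*))^{-1}\|\cdot\|\mathbf{H}_{\text{GP}}^{-1}(\boldsymbol{a}^*)\|^{-1}/4 = 1/2 < 1$ (the $\mathbf\Delta$ smallness hypothesis), Banach's Lemma shows $\mathbf{H}_{\text{Kau}}(\boldsymbol{a}_i)$ is nonsingular (so the Kaufman step is well defined) and $\|(\mathbf{H}_{\text{Kau}}(\boldsymbol{a}_i))^{-1}\| \le 2\|(\mathbf{H}_{\text{GP}}(\boldsymbol{a}_i))^{-1}\| \le 4\|(\mathbf{H}_{\text{GP}}(\boldsymbol{a}^*))^{-1}\|$. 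Third, from $\nabla r_2(\boldsymbol{a}_i) = \int_0^1 \nabla^2 r_2(\boldsymbol{a}^* + t\boldsymbol{e}_i)\boldsymbol{e}_i\,dt$ (using $\nabla r_2(\boldsymbol{a}^*) = \mathbf{0}$) and continuity of $\nabla^2 r_2$, the same argument as in the proof of Lemma~\ref{lemma2.2} gives $\|\nabla r_2(\boldsymbol{a}_i)\| \le 2\|\nabla^2 r_2(\boldsymbol{a}^*)\|\,\|\boldsymbol{e}_i\|$ after further shrinking $\delta$. Multiplying the three bounds yields $\|\boldsymbol{e}_{i+1}^{\text{Kau}} - \boldsymbol{e}_{i+1}^{\text{GP}}\| \le 16\|(\mathbf{H}_{\text{GP}}(\boldsymbol{a}^*))^{-1}\|^2\|\nabla^2 r_2(\boldsymbol{a}^*)\|\,\|\mathbf\Delta(\boldsymbol{a}_i)\|\,\|\boldsymbol{e}_i\|$, and since $K \ge \|(\mathbf{H}_{\text{GP}}(\boldsymbol{a}^*))^{-1}\|$ by the definition of $K$ in Theorem~\ref{theorem2.1}, one factor $\|(\mathbf{H}_{\text{GP}}(\boldsymbol{a}^*))^{-1}\|$ can be absorbed into $K$.

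Adding the Theorem~\ref{theorem2.1} estimate for $\|\boldsymbol{e}_{i+1}^{\text{GP}}\|$ to this correction bound and taking $\delta$ to be the minimum of all radii produced above gives the claimed inequality with $\tau_1 = \|\nabla^2 r_2(\boldsymbol{a}^*) - \mathbf{H}_{\text{GP}}(\boldsymbol{a}^*)\|$ and $\tau_2 = 16\|\mathbf{H}_{\text{GP}}^{-1}(\boldsymbol{a}^*)\|\,\|\nabla^2 r_2(\boldsymbol{a}^*)\|$. I expect the main difficulty to be bookkeeping rather than conceptual: one must intersect the neighborhoods coming from Theorem~\ref{theorem2.1}, from two uses of Lemma~\ref{lemma2.2} (for $\mathbf{H}_{\text{GP}}$ and for the gradient estimate), and from the Banach-lemma step, while tracking the constants carefully enough to land exactly on the factor $16$. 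The one genuinely structural point, which must be secured first, is the identity $\mathbf{J}_{\text{Kau}}^\top\boldsymbol{r}_2 = \mathbf{J}_{\text{GP}}^\top\boldsymbol{r}_2$: it is precisely what decouples the (unchanged) gradient from the (perturbed) Hessian approximation and makes the additive error decomposition — and hence the reduction to the already-proven Theorem~\ref{theorem2.1} — possible.
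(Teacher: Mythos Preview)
Your proposal is correct and follows essentially the same route as the paper: decompose the Kaufman update as the GP update plus a correction, invoke Theorem~\ref{theorem2.1} for the GP part, use the key identity $\mathbf{J}_{\text{Kau}}^\top\boldsymbol{r}_2=\mathbf{J}_{\text{GP}}^\top\boldsymbol{r}_2$ so that only the approximate Hessian differs, and then combine Lemma~\ref{lemma2.2}, the $\mathbf\Delta$-smallness hypothesis, and Banach's Lemma to bound $\|(\mathbf{H}_{\text{GP}})^{-1}-(\mathbf{H}_{\text{Kau}})^{-1}\|$ and $\|\nabla r_2(\boldsymbol{a}_i)\|$. The only cosmetic differences are that you prove the gradient identity directly from $\boldsymbol{\Phi}^\dagger\mathbf{P}_{\boldsymbol{\Phi}}^\perp=\mathbf{0}$ (the paper cites \cite{Gan2018On}), you write the resolvent identity explicitly rather than citing Banach's Lemma for the difference of inverses, and you spell out why one factor of $\|\mathbf{H}_{\text{GP}}^{-1}(\boldsymbol{a}^*)\|$ can be absorbed into $K$---a step the paper's proof glosses over when passing from the squared factor in \eqref{eq:2.8} to the stated bound.
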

\begin{proof}
 The update equations for the nonlinear parameters obtained by the VP algorithm in the forms of Golub \& Pereyra and Kaufman are:
\begin{scriptsize}\[\begin{split}
&\boldsymbol{a}_{i+1}^{\text{GP}} = \boldsymbol{a}_{i} - \mathbf{H}^{-1}_{\text{GP}}(\boldsymbol{a}_{i}) \mathbf{J}^\top_{\text{GP}}(\boldsymbol{a}_{i}) \boldsymbol{r}_2(\boldsymbol{a}_{i}),\\&\boldsymbol{a}_{i+1}^{\text{Kau}} = \boldsymbol{a}_{i} - \mathbf{H}^{-1}_{\text{Kau}}(\boldsymbol{a}_{i}) \mathbf{J}^\top_{\text{Kau}}(\boldsymbol{a}_{i}) \boldsymbol{r}_2(\boldsymbol{a}_{i}) \\&= \boldsymbol{a}_{i+1}^{\text{GP}} + (\mathbf{H}^{-1}_{\text{GP}}(\boldsymbol{a}_{i}) - \mathbf{H}^{-1}_{\text{Kau}}(\boldsymbol{a}_{i})) \mathbf{J}^\top_{\text{GP}}(\boldsymbol{a}_{i}) \boldsymbol{r}_2(\boldsymbol{a}_{i}) \\&= \boldsymbol{a}_{i+1}^{\text{GP}} + (\mathbf{H}^{-1}_{\text{GP}}(\boldsymbol{a}_{i}) - (\mathbf{H}_{\text{GP}}(\boldsymbol{a}_{i}) + \mathbf\Delta(\boldsymbol{a}_{i}))^{-1}) \mathbf{J}^\top_{\text{GP}}(\boldsymbol{a}_{i}) \boldsymbol{r}_2(\boldsymbol{a}_{i}).
\end{split}
\]\end{scriptsize}
According to \cite{Gan2018On}, the gradient can be determined by the product of different Jacobian matrices and residual vectors, i.e., \(\mathbf{J}^\top_{\text{GP}}(\boldsymbol{a}) \boldsymbol{r}_2(\boldsymbol{a})=\mathbf{J}^\top_{\text{Kau}}(\boldsymbol{a}) \boldsymbol{r}_2(\boldsymbol{a})\), therefore, 
\begin{scriptsize}
\begin{equation} 
    \|\boldsymbol{e}_{i+1}^{\text{Kau}}\| \le \|\boldsymbol{e}_{i+1}^{\text{GP}}\| + \|\mathbf{H}^{-1}_{\text{GP}}(\boldsymbol{a}_{i}) - (\mathbf{H}_{\text{GP}}(\boldsymbol{a}_{i}) + \mathbf\Delta(\boldsymbol{a}_{i}))^{-1}\| \|\nabla f(\boldsymbol{a}_{i})\label{eq:2.6}\|. 
\end{equation}\end{scriptsize}
By the assumptions of \( {\color{black} R_2}(\boldsymbol{a}) \), (i.e., the reduced function \( {\color{black} R_2} \) is second-order Lipschitz continuously differentiable, {with} \( \nabla {\color{black} R_2}(\boldsymbol{a}^*) = 0 \)  and  a positive definite Hessian \( \nabla^2 {\color{black} R_2}(\boldsymbol{a}^*) \) ), it can be discerned that \( \|\nabla {\color{black} R_2}(\boldsymbol{a}_{i})\| \) is bounded, and \begin{equation}
    \|\mathbf{J}^\top_{\text{GP}}(\boldsymbol{a}_{i}) \boldsymbol{r}_2(\boldsymbol{a}_{i})\| \leq 2\|\nabla^2 {\color{black} R_2}(\boldsymbol{a}^*)\| \|\boldsymbol{e}_{i}\|.\label{eq:2.7}\end{equation}
Additionally, since \( \mathbf{J}_{\text{GP}}(\boldsymbol{a}) \) is \( L \)-continuous, and combining Lemma \ref{lemma2.2} and the conclusions drawn in \cite{kelley1995iterative}, it can be deduced  that there exists \( {\color{black} 0} < \sigma\leq{\color{black} \rho} \) such that for all {\( \boldsymbol{a} \in  \mathcal{B}({\color{black} \boldsymbol{a}^*},\sigma) \)}, \[ \begin{split}
\|\mathbf{H}_{\text{GP}}(\boldsymbol{a})\| \leq 2\|\mathbf{H}_{\text{GP}}(\boldsymbol{a}^*)\|, \\ \|\mathbf{H}_{\text{GP}}^{-1}(\boldsymbol{a})\| \leq 2\|\mathbf{H}_{\text{GP}}^{-1}(\boldsymbol{a}^*)\|. \end{split}\] Furthermore, given that \( \|\mathbf\Delta(\boldsymbol{a})\| \leq \|\mathbf{H}^{-1}_{\text{GP}}(\boldsymbol{a}^*)\|^{-1}/4 \), it follows that \[ \|\mathbf\Delta(\boldsymbol{a})\| \leq \|\mathbf{H}^{-1}_{\text{GP}}(\boldsymbol{a}_{i})\|^{-1}/2. \]
According to the Banach Lemma \cite{kelley1995iterative}, the matrix \( \mathbf{H}_{\text{GP}}(\boldsymbol{a}_{i}) + \mathbf\Delta(\boldsymbol{a}_{i}) \) is nonsingular, and {we have} \[ \|(\mathbf{H}_{\text{GP}}(\boldsymbol{a}_{i}) + \mathbf\Delta(\boldsymbol{a}_{i}))^{-1}\| \leq 2\|\mathbf{H}^{-1}_{\text{GP}}(\boldsymbol{a}_{i})\| \leq 4\|\mathbf{H}_{\text{GP}}^{-1}(\boldsymbol{a}^*)\|. \]
Using the Banach Lemma \cite{kelley1995iterative} again, we arrive at the bound
\begin{scriptsize}\begin{equation} \|\mathbf{H}^{-1}_{\text{GP}}(\boldsymbol{a}_{i}) - (\mathbf{H}_{\text{GP}}(\boldsymbol{a}_{i}) + \mathbf\Delta(\boldsymbol{a}_{i}))^{-1}\| \leq 8\|\mathbf{H}^{-1}_{\text{GP}}(\boldsymbol{a}^*)\|^2\|\mathbf\Delta(\boldsymbol{a}_{i})\|. \label{eq:2.8}\end{equation}\end{scriptsize}
Substituting \eqref{eq:2.7} and \eqref{eq:2.8} into equation \eqref{eq:2.6}, we obtain \[ \begin{split}
\|\boldsymbol{e}^{\text{Kau}}_{i+1}\| \leq &K(\|\boldsymbol{e}_{i}\|^2 + \|\nabla^2 {\color{black} R_2}(\boldsymbol{a}^*) - \mathbf{H}_{\text{GP}}(\boldsymbol{a}^*)\| \|\boldsymbol{e}_{i}\|) \\&+ 16\|\mathbf{H}^{-1}_{\text{GP}}(\boldsymbol{a}^*)\| \|\nabla^2 {\color{black} R_2}(\boldsymbol{a}^*)\| \|\mathbf\Delta(\boldsymbol{a}_{i})\| \|\boldsymbol{e}_{i}\|. \end{split}\]
Denote \( \tau_1 = \|\nabla^2 {\color{black} R_2}(\boldsymbol{a}^*) - \mathbf{H}_{\text{GP}}(\boldsymbol{a}^*)\| \) and \( \tau_2 = 16 \|\mathbf{H}^{-1}_{\text{GP}}(\boldsymbol{a}^*)\| \|\nabla^2 {\color{black} R_2}(\boldsymbol{a}^*)\| \), then \[ \|\boldsymbol{e}^{\text{Kau}}_{i+1}\| \leq K(\|\boldsymbol{e}_{i}\|^2 + \tau_1 \|\boldsymbol{e}_{i}\|) + \tau_2 \|\mathbf\Delta(\boldsymbol{a}_{i})\| \|\boldsymbol{e}_{i}\|.\]
\end{proof}
\textcolor{black}{Theorem \ref{theorem2.2} shows the local convergence behavior of Kaufman's VP algorithm}. Comparing {Theorems} \ref{theorem2.1} and \ref{theorem2.2}, it can be found that the Golub \& Pereyra's VP algorithm and Kaufman's VP algorithm can achieve similar {local convergence rate}. To facilitate comparison, the local convergence rates of both {algorithms} are listed as follows: \[ \begin{split}
 \|\boldsymbol{e}^{\text{GP}}_{i+1}\| &\leq K(\|\boldsymbol{e}_{i}\|^2 + \tau_1 \|\boldsymbol{e}_{i}\|),\\  \|\boldsymbol{e}^{\text{Kau}}_{i+1}\| &\leq K(\|\boldsymbol{e}_{i}\|^2 + \tau_1 \|\boldsymbol{e}_{i}\|) + \tau_2 \|\mathbf\Delta(\boldsymbol{a}_{i})\| \|\boldsymbol{e}_{i}\|. \end{split}\]Intuitively, when the residual \(  \boldsymbol{r}_2(\boldsymbol{a}_i) = \mathbf{P}_{\boldsymbol{\Phi}}^{\perp}\boldsymbol{y} \) is small, \( \|\mathbf\Delta(\boldsymbol{a}_i)\| = \|(\mathbf{P}_{\boldsymbol{\Phi}}^{\perp}D\boldsymbol{\Phi}\boldsymbol{\Phi}^{\dagger})^\top\boldsymbol{y}\| = \|(D\boldsymbol{\Phi}\boldsymbol{\Phi}^{\dagger})^\top\mathbf{P}_{\boldsymbol{\Phi}}^{\perp}\boldsymbol{y}\| \) is also relatively small. In such case,  the  Golub \& Pereyra's and Kaufman's VP {achieve} similar local convergence rates. A similar analysis can be presented from another perspective. Noting that \[ \begin{split}
\mathbf{J}_{\text{GP}}(\boldsymbol{a})&=\frac{\partial \boldsymbol{r}_2}{\partial \boldsymbol{a}}=\frac{\partial \boldsymbol{r}_2}{\partial \boldsymbol{a}}+\frac{\partial \boldsymbol{r}_2}{\partial \boldsymbol{c}}\frac{d \boldsymbol{c}}{d \boldsymbol{a}}\\&=-D\boldsymbol{\Phi}\boldsymbol{\Phi}^{\dagger}\boldsymbol{y}-\boldsymbol{\Phi}\frac{d \boldsymbol{c}}{d \boldsymbol{a}}.\end{split}\]
{Chen et al. \cite{chen2021insights} } pointed out that the Kaufman's VP algorithm essentially takes a first-order linear approximation when calculating the derivatives of linear parameters to nonlinear parameters, that is, \(\Delta \boldsymbol{c} \) and \( \Delta \boldsymbol{a} \) satisfy \( \Delta \boldsymbol{c} = -(\boldsymbol{\Phi}^{\dagger}D\boldsymbol{\Phi}\boldsymbol{\Phi}^{\dagger}\boldsymbol{y}) \Delta \boldsymbol{a}\). The derivative \( \frac{d \boldsymbol{c}}{d \boldsymbol{a}} \) can be expressed as \( \frac{d \boldsymbol{c}}{d \boldsymbol{a}} = \frac{\Delta \boldsymbol{c}}{\Delta \boldsymbol{a}} + {\color{black}\zeta} = -\boldsymbol{\Phi}^{\dagger}D\boldsymbol{\Phi}\boldsymbol{\Phi}^{\dagger} + {\color{black}\zeta} \), {leading to the relationship for the Jacobian matrices}
\[
\begin{aligned}
\mathbf{J}_{\text{GP}}(\boldsymbol{a}) &= -D\boldsymbol{\Phi}\boldsymbol{\Phi}^{\dagger}\boldsymbol{y} - \boldsymbol{\Phi}\left[-\boldsymbol{\Phi}^{\dagger}D\boldsymbol{\Phi}\boldsymbol{\Phi}^{\dagger}\boldsymbol{y} + {\color{black}\zeta}\right] \\
&= -\mathbf{P}_{\boldsymbol{\Phi}}^{\perp}D\boldsymbol{\Phi}\boldsymbol{\Phi}^{\dagger}\boldsymbol{y} - \boldsymbol{\Phi}{\color{black}\zeta} = \mathbf{J}_{\text{Kau}}(\boldsymbol{x}) - \mathcal O({\color{black}\zeta}).
\end{aligned}
\]
When the error in the first-order linear approximation is small, the discrepancy between the two forms of the Jacobian matrices is relatively small, leading to comparable local convergence rates for Golub \& Pereyra's and Kaufman's VP algorithms.

\subsection{{Comparison of VP Algorithm and Joint Algorithm}}
In the following part, we {apply} similar analytical methods to compare the local convergence of the joint optimization algorithm with the VP algorithm. For clarity, we redefine some of the relevant symbols. Let the vector \( \boldsymbol{\theta} = [\boldsymbol{c}^\top,\boldsymbol{a}^\top]^\top \), \(  \boldsymbol{\epsilon}_i^{\boldsymbol{\theta}} = \boldsymbol{\theta}_i - \boldsymbol{\theta}^* \), \(  \boldsymbol{\epsilon}_i^{\boldsymbol{a}} = \boldsymbol{a}_i - \boldsymbol{a}^* \). 

Following the analytical approach similar to Theorem \ref{theorem2.1}, when applying the Joint optimization method, the parameter error at the \((i+1)\)-th iteration can be represented as: \begin{scriptsize}\begin{equation} \text{Joint: }
\| \boldsymbol{\epsilon}^{\boldsymbol{\theta}}_{i+1}\| \leq K_{\text{J}}(\| \boldsymbol{\epsilon}^{\boldsymbol{\theta}}_{i}\|^2 + \|\nabla^2 {\color{black} R_2}(\boldsymbol{\theta}^*) - \mathbf{J}^\top(\boldsymbol{\theta}^*) \mathbf{J}(\boldsymbol{\theta}^*)\| \| \boldsymbol{\epsilon}^{\boldsymbol{\theta}}_{i}\|),\label{eq:2.9}\end{equation}\end{scriptsize}
where \( K_{\text{J}} = \sup_{\boldsymbol{a}_{i} \in \overline{\mathcal{B}({\color{black} \boldsymbol{a}^*},{\color{black} \rho})}} \|(\mathbf{H}(\boldsymbol{\theta}_{i}))^{-1}\| (1+L\|\mathbf{J}^\top(\boldsymbol{\theta}_{i})\|) \), {with} $\mathbf{J}$ and $\mathbf{H}$ denoting the Jacobian and Hessian matrices of the original function $r(\boldsymbol{a},\boldsymbol{c})$, respectively. Furthermore, according to Theorem \ref{theorem2.1}, the update formula of the VP algorithm satisfies: \begin{scriptsize}\begin{equation}\text{VP: }
\| \boldsymbol{\epsilon}^{\boldsymbol{a}}_{i+1}\| \leq K(\| \boldsymbol{\epsilon}^{\boldsymbol{a}}_{i}\|^2 + \|\nabla^2 {\color{black} R_2}(\boldsymbol{a}^*) - \mathbf{J}_{\text{GP}}^\top(\boldsymbol{a}^*) \mathbf{J}_{\text{GP}}(\boldsymbol{a}^*)\| \| \boldsymbol{\epsilon}^{\boldsymbol{a}}_{i}\|).\label{eq:2.10}\end{equation}\end{scriptsize}
Unlike the joint optimization method, the linear parameter \( \boldsymbol{c} \) in the VP method is determined by solving the optimization problem with fixed nonlinear parameter \( \boldsymbol{a} \): 
\[ \hat{\boldsymbol{c}}(\boldsymbol{a}) = \arg\min_{\boldsymbol{c}} r(\boldsymbol{c}, \boldsymbol{a}) = \boldsymbol{\Phi}^{\dagger}(\boldsymbol{a})\boldsymbol{y}.\]
Therefore, the error of the linear parameters at the (\(i+1\))-th iteration can be bounded by the following upper bound: \[ \begin{split}
\|  \boldsymbol{\epsilon}_{i+1}^{\boldsymbol{c}} \| &= \| \hat{\boldsymbol{c}}_{i+1} - \boldsymbol{c}^* \| \\&\le \| \boldsymbol{\Phi}^{\dagger}(\boldsymbol{a}_{i+1}) - \boldsymbol{\Phi}^{\dagger}(\boldsymbol{a}^*) \| \| \boldsymbol{y} \| \\&\le (\| D\boldsymbol{\Phi}^{\dagger}(\boldsymbol{a}^*) \| \|  \boldsymbol{\epsilon}^{\boldsymbol{a}}_{i+1} \| + \mathcal{O}(\|  \boldsymbol{\epsilon}^{\boldsymbol{a}}_{i+1} \|^2)) \| \boldsymbol{y} \|. \end{split}\]
Denote \( \eta = (\| D\boldsymbol{\Phi}^{\dagger}(\boldsymbol{a}^*) \| + \mathcal{O}(\|  \boldsymbol{\epsilon}^{\boldsymbol{a}}_{i+1} \|) \| \boldsymbol{y} \| \), we subsequently can obtain  \begin{equation}
\|  \boldsymbol{\epsilon}_{i+1}^{\boldsymbol{c}} \| \le \eta \|  \boldsymbol{\epsilon}_{i+1}^{\boldsymbol{a}} \|. \label{eq:2.12}\end{equation}
{By} combining \eqref{eq:2.10} and \eqref{eq:2.12}, we can express the error of all parameters \(\boldsymbol \theta \) in the model estimated by the VP algorithm at the ($i+1$)-th iteration as:
\begin{equation}
\| \boldsymbol{\epsilon}^{\boldsymbol{\theta}}_{i+1}\| \leq K \| \boldsymbol{\epsilon}_i^{\boldsymbol{a}}\|^2 + K_1 \| \boldsymbol{\epsilon}_i^{\boldsymbol{a}}\|, \label{eq:2.13}\end{equation}
where \(K_1 = K\|\nabla^2 {\color{black} R_2}(\boldsymbol{a}^*) - \mathbf{J}_{\text{GP}}^\top(\boldsymbol{a}^*) \mathbf{J}_{\text{GP}}(\boldsymbol{a}^*)\|+\eta\). By comparing equations \eqref{eq:2.13} and \eqref{eq:2.9}, {it is evident} that the bound of the parameter error of the VP method is only determined by the error of the nonlinear parameters. This advantage in convergence speed becomes particularly evident when the linear parameters have a high dimensionality, which aligns with the findings observed in the empirical studies \cite{erichson2020sparse,espanol2023variable,aravkin2017efficient}.

\section{Variable Projection for Separable Nonlinear Problem with Large Residual}
\label{sec:3}
\textcolor{black}{Building on the convergence analysis of the VP algorithm, this section introduces a more efficient separable optimization method for nonlinear optimization problems with relatively large residual. Initially, after computing the Jacobian matrix of the reduced objective, the approximate Hessian matrix can be expressed as:}
\begin{equation*}
    \mathbf{H}^k\approx \mathbf{J}^{\top}({\color{black}\boldsymbol{a}^k})\mathbf{J}({\color{black}\boldsymbol{a}^k}),
\end{equation*}
where the Jacobian matrix could adopt different forms such as the Golub \& Pereyra's form {and} Kaufman's form. For {consistency}, we denote it as $\mathbf{J}({\color{black}\boldsymbol{a}})$. 
For problems with a small residual, the Gauss-Newton (GN) or Levenberg-Marquardt (LM) updates are generally effective in optimizing the nonlinear parameters of the objective. However, in practical scenarios, we frequently encounter problems with large residual. In such cases, the second term of the Hessian matrix of the reduced objective
\begin{equation}\label{3-1}
    \mathbf{H}^k=\mathbf{J}^{\top}({\color{black}\boldsymbol{a}^k})\mathbf{J}({\color{black}\boldsymbol{a}^k})+\mathbf T^k
\end{equation}
includes at least $\sum\limits_{j=1}^m{ \boldsymbol{r}_{2_j}}(\boldsymbol{a}^k)\nabla^2 { \boldsymbol{r}_{2_j}}(\boldsymbol{a}^k)$, is non-negligible and exerts a considerable influence on the convergence of the algorithm. This can also be inferred from our theoretical analysis (Theorem \ref{theorem2.2}), which suggests that when the residual {is} large, the deviation of the approximate Hessian matrix significantly impacts the convergence of the algorithm. This necessitates the design of a VP algorithm for Large Residual (VPLR) tailored for separable nonlinear optimization problems with large residual. The proposed VPLR algorithm addresses the {coupling relationships between different parts of parameters within the model and recursively corrects the approximate Hessian matrix, thereby improving the overall performance of the algorithm.}

More explicitly, our objective is for $\mathbf{T}^{k+1}$ to bear as much resemblance to $\sum\limits_{j=1}^m{ \boldsymbol{r}_{2_j}}(\boldsymbol{a}^{k+1})\nabla^2 { \boldsymbol{r}_{2_j}}(\boldsymbol{a}^{k+1})$ as feasible. {Based on} the first-order Taylor expansion, $\mathbf{T}^k$ should endeavor to preserve the characteristics of the original Hessian matrix to the greatest extent possible, as illustrated in:
\begin{small}
\begin{equation}
\begin{split}
    \mathbf{T}^{k+1}\boldsymbol{s}^k&\approx \left(\sum\limits_{j=1}^m{ \boldsymbol{r}_{2_j}}(\boldsymbol{a}^{k+1})\nabla^2 { \boldsymbol{r}_{2_j}}(\boldsymbol{a}^{k+1})\right)\boldsymbol{s}^k\\&=\sum\limits_{j=1}^m{ \boldsymbol{r}_{2_j}}(\boldsymbol a^{k+1})(\nabla^2{\boldsymbol{r}_{2_j}}(\boldsymbol{a}^{k+1}))\boldsymbol{s}^k\\& \approx \sum\limits_{j=1}^m{ \boldsymbol{r}_{2_j}}(\boldsymbol{a}^{k+1})(\nabla { \boldsymbol{r}_{2_j}}(\boldsymbol{a}^{k+1})-\nabla { \boldsymbol{r}_{2_j}}(\boldsymbol{a}^{k})) \\& =\mathbf{J}^\top({\color{black}\boldsymbol{a}^{k+1}}) \boldsymbol{r}_2^{k+1}-\mathbf{J}^\top({\color{black}\boldsymbol{a}^{k}}) \boldsymbol{r}_2^{k+1},\label{3-2}
\end{split}
\end{equation}
\end{small}
where $\boldsymbol{s}^k=\boldsymbol{a}^{k+1}-\boldsymbol{a}^k$. Let $\hat{\boldsymbol{g}}^k=\mathbf{J}^\top({\color{black}\boldsymbol{a}^{k+1}}) \boldsymbol{r}_2^{k+1}-\mathbf{J}^\top({\color{black}\boldsymbol{a}^{k}}) \boldsymbol{r}_2^{k+1}$, then the condition that the correction term $\mathbf{T}^k$ is required to satisfy can be articulated as follows:
\begin{equation}
\mathbf{T}^{k+1}\boldsymbol{s}^k=\hat{\boldsymbol{g}}^k.\label{3-3}
\end{equation}
\eqref{3-3} {resembles} the secant equation prevalent in Quasi-Newton methods \cite{dennis1996numerical}. Consequently, we draw inspiration from the updating mechanism of the Quasi-Newton matrix to effectuate the update of the {correction} matrix $\mathbf{T}^k$, as delineated below:
\begin{equation}
    \mathbf{T}^{k+1} = \mathbf{T}^{k} - \frac{\mathbf{T}^{k} \boldsymbol{s}_{k} \boldsymbol{s}_{k}^\top \mathbf{T}^{k}}{\boldsymbol{s}_{k}^\top \mathbf{T}^{k} \boldsymbol{s}_{k}} + \frac{\hat{\boldsymbol{g}}_{k} \hat{\boldsymbol{g}}_{k}^\top}{\hat{\boldsymbol{g}}_{k}^\top \boldsymbol{s}_{k}}.\label{3-4}
\end{equation}
It is important to distinguish \eqref{3-4} from the Broyden–Fletcher–Goldfarb–Shanno variant of Quasi-Newton methods. The term $\hat{\boldsymbol{g}}_k$ in \eqref{3-4} does not signify the gradient difference between two consecutive iterations. Once the Hessian matrix of the reduced objective function is obtained, the direction of update for the nonlinear parameters $\boldsymbol{a}$ can be {determined} by resolving the following equation: 
\begin{equation}
\left(\mathbf{J}^{\top}({\color{black}\boldsymbol{a}^k})\mathbf{J}({\color{black}\boldsymbol{a}^k})+\mathbf{T}^k\right){\boldsymbol{d}}^k_{\boldsymbol{a}} =-\mathbf{J}^{\top}({\color{black}\boldsymbol{a}^k})  \boldsymbol{r}_2(\boldsymbol{a}^k).\label{3-5}
\end{equation}
This facilitates the computation of the update for the nonlinear parameters:
\begin{equation}
    \boldsymbol{a}^{k+1}=\boldsymbol{a}^k+\beta^k {\boldsymbol{d}}^k_{\boldsymbol{a}},\label{3-6}
\end{equation}
where \( \beta^k \) denotes the step size at the \( k \)-th iteration, determined through a backtracking line search strategy that \textcolor{black}{satisfies the Armijo condition} \cite{armijo1966minimization,nocedal1999numerical} to ensure a sufficient decrease in the objective function at each iteration.

The algorithm {outlined} above, VPLR, specifically designed for separable nonlinear optimization problems with large residual, is summarized in Algorithm \ref{alg:1}. By compensating for the impact of large residual through equations \eqref{3-1}-\eqref{3-4}, and in conjunction with the fundamental conclusion of Theorem \ref{theorem2.2}, it is not difficult to discern that our proposed VPLR demonstrates a superior convergence rate compared to the traditional VP algorithm.

\begin{algorithm}[htbp]
\caption{Refined variable projection algorithm for separable nonlinear problem with large residual, VPLR}
\begin{algorithmic}
\label{alg:1}
\STATE /*\textit{ \textbf{Initialize} }*/
\STATE Choose $\boldsymbol{a}^0\in \mathbb{R}^m$ , a small positive number $\varepsilon$ and the maximum iteration $N$. Set $k = 0$.
\STATE /*\textit{ \textbf{Estimate parameters} }*/
\STATE \textbf{Step1:} Eliminate the linear parameters $\boldsymbol{c}$ by solving a linear subprogram \eqref{2-1} to obtain a reduced objective function ${\color{black} R_2}(\boldsymbol{a})$.
\STATE \textbf{Step2:} Compute the Jacobian matrix $\mathbf{J}({\color{black}\boldsymbol{a}^k})$ adhering to either \eqref{eq:2.2} or \eqref{eq:2.5.5} (manifested in the form of Golub \& Pereyra or Kaufman).
\STATE \textbf{Step3:} Resolve \eqref{3-5} to determine the update direction ${\boldsymbol{d}}^k_{\boldsymbol{a}}$, subsequently update the nonlinear parameters $\boldsymbol{a}$ according to \eqref{3-6}.
\STATE \textbf{Step4:} Proceed to iteratively update the correction matrix $\mathbf{T}^{k+1}$ employing the update rule outlined in equation \eqref{3-4}.
\STATE \textbf{Step5:} if $|{\color{black} R_2}(\boldsymbol{a}^{k+1})-{\color{black} R_2}(\boldsymbol{a}^{k})|<\varepsilon$ or $\|\boldsymbol{a}^{k+1}-\boldsymbol{a}^{k}\|<\varepsilon$ or $k>N$
\STATE $\quad$ Terminate the process;
\STATE else
\STATE $\quad$ $k=k+1$, go to Step 1;
\STATE end
\STATE \textbf{Output}
\STATE Nonlinear parameters: $\hat{\boldsymbol{a}} = \boldsymbol{a}^{k+1}$
\STATE Linear parameters: $\hat{\boldsymbol{c}} = \arg\min_{\boldsymbol{c}} r(\boldsymbol{c},\hat{\boldsymbol{a}})=\mathbf{\Phi}^\dagger\boldsymbol{y}$.
\end{algorithmic}
\end{algorithm}

\section{NUMERICAL EXAMPLES}
\label{sec:4}
In this section, we present the results of numerical experiments on both synthetic and real-world datasets to {evaluate} the effectiveness of the proposed VPLR algorithm. We perform a comprehensive evaluation and comparison of our algorithm with different optimization algorithms, including the traditional VP algorithm; the Joint Optimization algorithm, which \textcolor{black}{employs the GN method} to optimize all model parameters simultaneously; the Alternating Minimization algorithm (AM) \cite{guminov2021combination}, which alternately \textcolor{black}{updates} between optimizing linear and nonlinear parameters within the model; and Block Coordinate Descent (BCD) \cite{nutini2022let,wright2015coordinate}, \textcolor{black}{which is also implemented using the GN method}. \textcolor{black}{The termination criteria for all algorithms are as follows: (1) the relative change in the objective function value between two consecutive iterations satisfies \( |r^k - r^{k-1}| < 10^{-10} |r^k| \), where \( r^k \) represents the value of the objective function \( r(\boldsymbol{a}, \boldsymbol{c}) \) at the \( k \)-th iteration; or (2) the maximum number of iterations reaches 100. To ensure fairness, the initial step size for all algorithms is uniformly set to 1.} All experiments were carried out on a PC with a 2.30 GHz CPU and 16GB RAM, using Matlab 2022 as the programming environment.

\subsection{Parameter Estimation of Exponential Model}
\textcolor{black}{We consider a exponential model of the form}
\begin{equation} 
\begin{split}
    &f({x}) =\ c_1 e^{-a_2 {x}^2}\cos(a_3 {x})+c_2 e^{-a_1 {x}^2}\cos(a_2 {x})\\&\quad \quad \quad \ +c_3 e^{-a_4 {x}^2}\sin(a_1 {x})+\epsilon, \\& {\boldsymbol{c}}=[c_1,c_2,c_3]^\top = [2,3,2]^\top,\\& {\boldsymbol{a}}=[a_1,a_2,a_3,a_4]^\top = [10, 15, 30, 8]^\top,\label{eq:4.1}
\end{split}
\end{equation}
where $\boldsymbol{c}$ denotes the linear parameter, $\boldsymbol{a}$ represents the nonlinear parameter, {and} $\epsilon$ is the noise. Utilizing these parameter {settings}, a dataset consisting of 200 observations is generated by uniformly sampling the interval $[0, 1]$ with a step size of 0.005. The resulting data are visualized in Fig. \ref{fig:4.1}.
\begin{figure}[htbp]
    \centering
        {\includegraphics[width=0.6\linewidth]{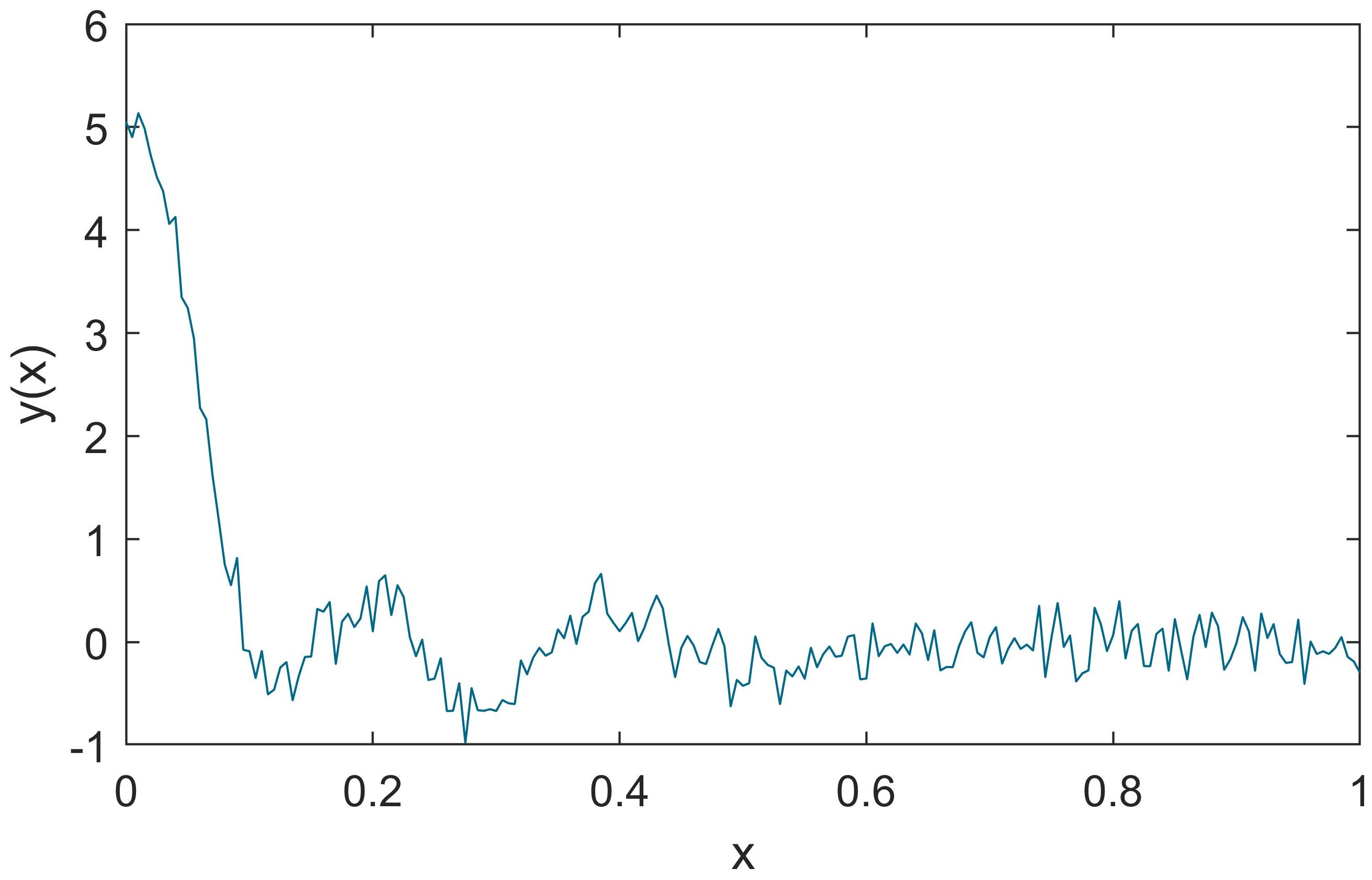}}
    \caption{Observational data generated from the exponential model}
    \label{fig:4.1}
\end{figure}

Based on the generated data, the parameter estimation problem can be represented as a separable nonlinear optimization problem: 
\begin{equation*}
\begin{split}
&\min\limits_{\boldsymbol{a},\boldsymbol{c}}r(\boldsymbol{a},\boldsymbol{c})=\frac{1}{2}\sum\limits_{i=1}^{200}\left(y_i-c_1 e^{-a_2 x_i^2}\cos(a_3 x_i)-\right.\\&\quad\quad\left. c_2 e^{-a_1 x_i^2}\cos(a_2 x_i))-c_3 e^{-a_4 x_i^2}\sin(a_1 x_i)\right)^2.
\end{split}
\end{equation*}
\begin{figure}[htbp]
    \centering
        {\includegraphics[width=0.6\linewidth]{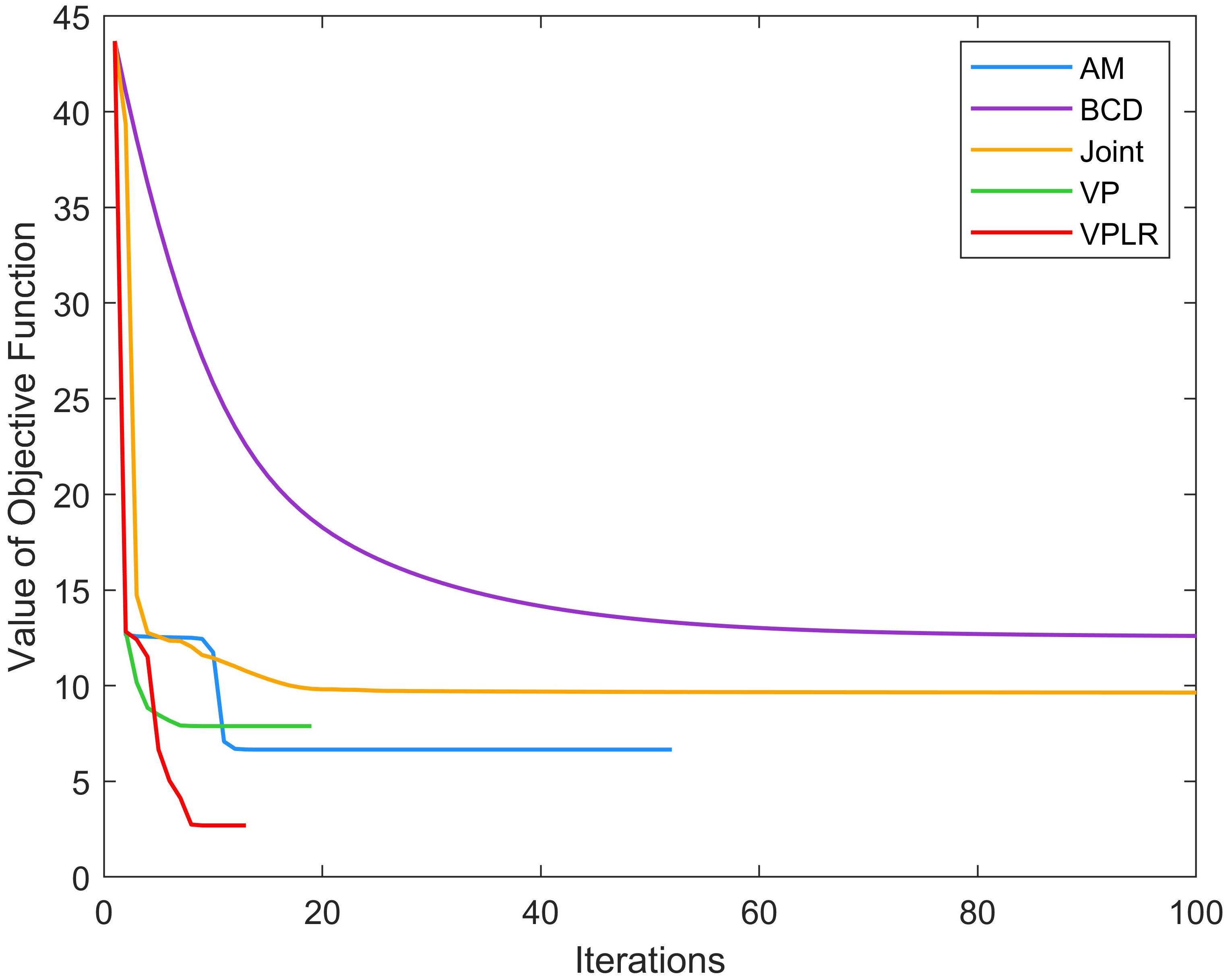}}
    \caption{Comparative convergence of different algorithms during iterative process.}
    \label{fig:4.2}
\end{figure}
We use different algorithms to estimate the model parameters, and the results are shown in Fig. \ref{fig:4.2} and \ref{fig:4.3}. \textcolor{black}{Notably, to verify the impact of large residual, we selected initial parameter values with large deviation from the true values and introduced significant noise, resulting in a large residual in the objective function}. As shown in Fig. \ref{fig:4.2}, both the traditional VP algorithm and the proposed VPLR algorithm properly handle the coupling relationship between the linear and nonlinear parameters in the model, {allowing for faster convergence}, while the joint algorithm and alternating algorithms (AM and BCD) converge relatively slower. Meanwhile, since the VPLR algorithm compensates for the effect of large residual on the Hessian matrix of the reduced objective function during the iteration process, it can find a better solution faster. This can also be observed from {Fig.} \ref{fig:4.3}, which shows the fitting results obtained by different algorithms. The models estimated using the Joint, BCD, and AM algorithms exhibit significant deviations in fitting the data, while the traditional VP algorithm, {hindered by} large initial deviations, {struggles} to fit the data adequately. In contrast, the model {obtained} by the VPLR algorithm fits the observed data exceptionally well. 
\begin{figure}[htbp]
    \centering
        {\includegraphics[width=0.6\linewidth]{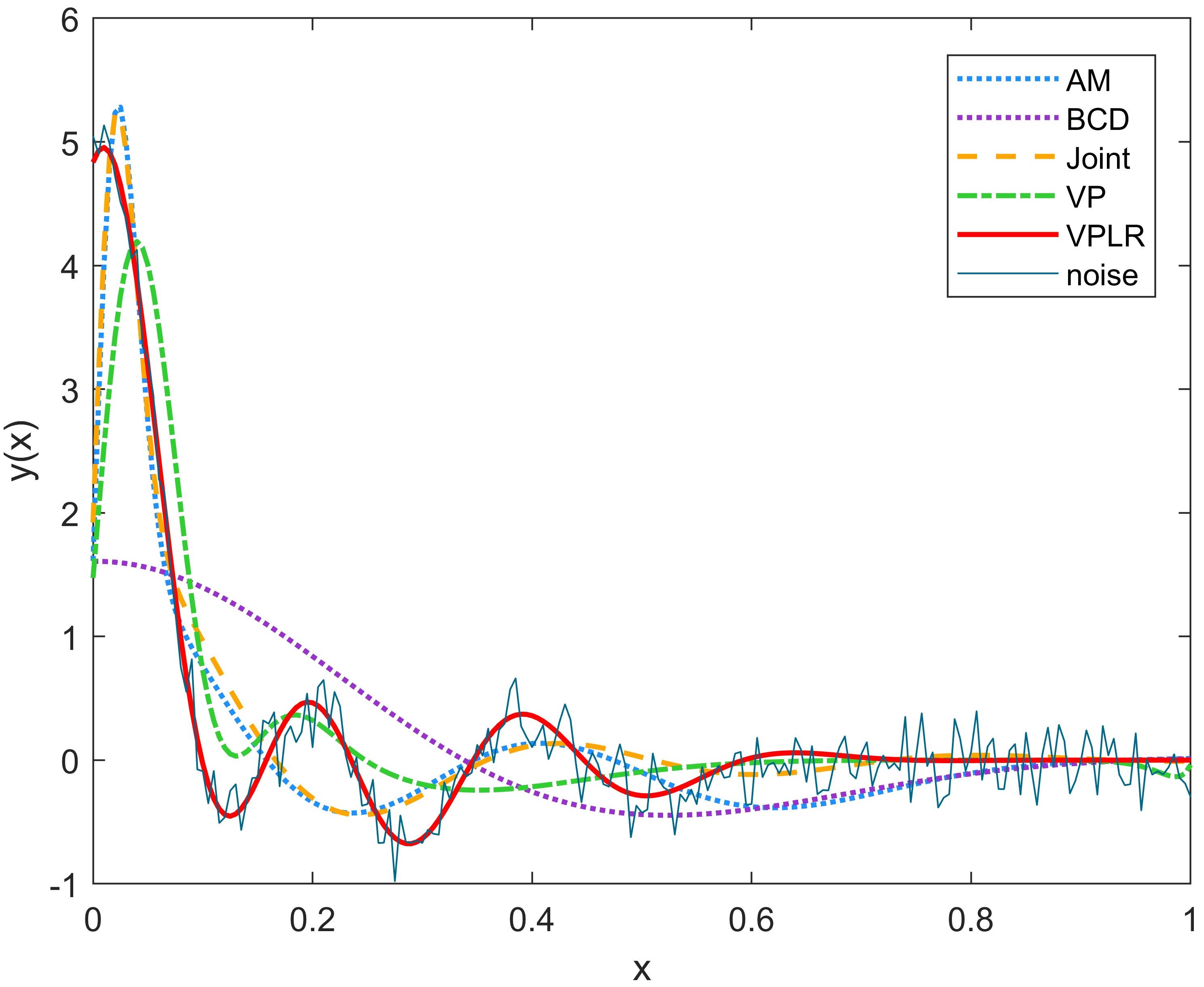}}
    \caption{Comparison of the results of fitting observed data to models estimated by different algorithms.}
    \label{fig:4.3}
\end{figure}







Additionally, to further verify the influence of initial values and observed samples on the convergence of different algorithms, we randomly generated 100 sets of randomized tests. The statistical results \textcolor{black}{of the objective function values and execution times} for fitting the exponential model using different algorithms are illustrated in Fig. \ref{fig:4.35} (AM: 2.70 ± 1.42, BCD: 14.02 ± 13.33, Joint: 3.20 ± 4.69, VP: 2.74 ± 5.16, VPLR: 2.67 ± 0.23) and \textcolor{black}{Fig. \ref{fig:4.36} (AM: 0.152 ± 0.088, BCD: 0.018 ± 0.007, Joint: 0.016 ± 0.0.10, VP: 0.032 ± 0.078, VPLR: 0.014 ± 0.026)}. 
\textcolor{black}{As shown in the figures, we can find that compared to the traditional VP and AM algorithms, VPLR converges more quickly to a better value and reduces overall execution times. While the BCD and Joint algorithms exhibit relatively low execution times, likely because they do not account for parameter coupling and do not require the solution of subproblems; however, however, this often leads to poorer convergence performance, as illustrated in Fig. 4.}

\begin{figure}[htbp]
    \centering
        {\includegraphics[width=0.6\linewidth]{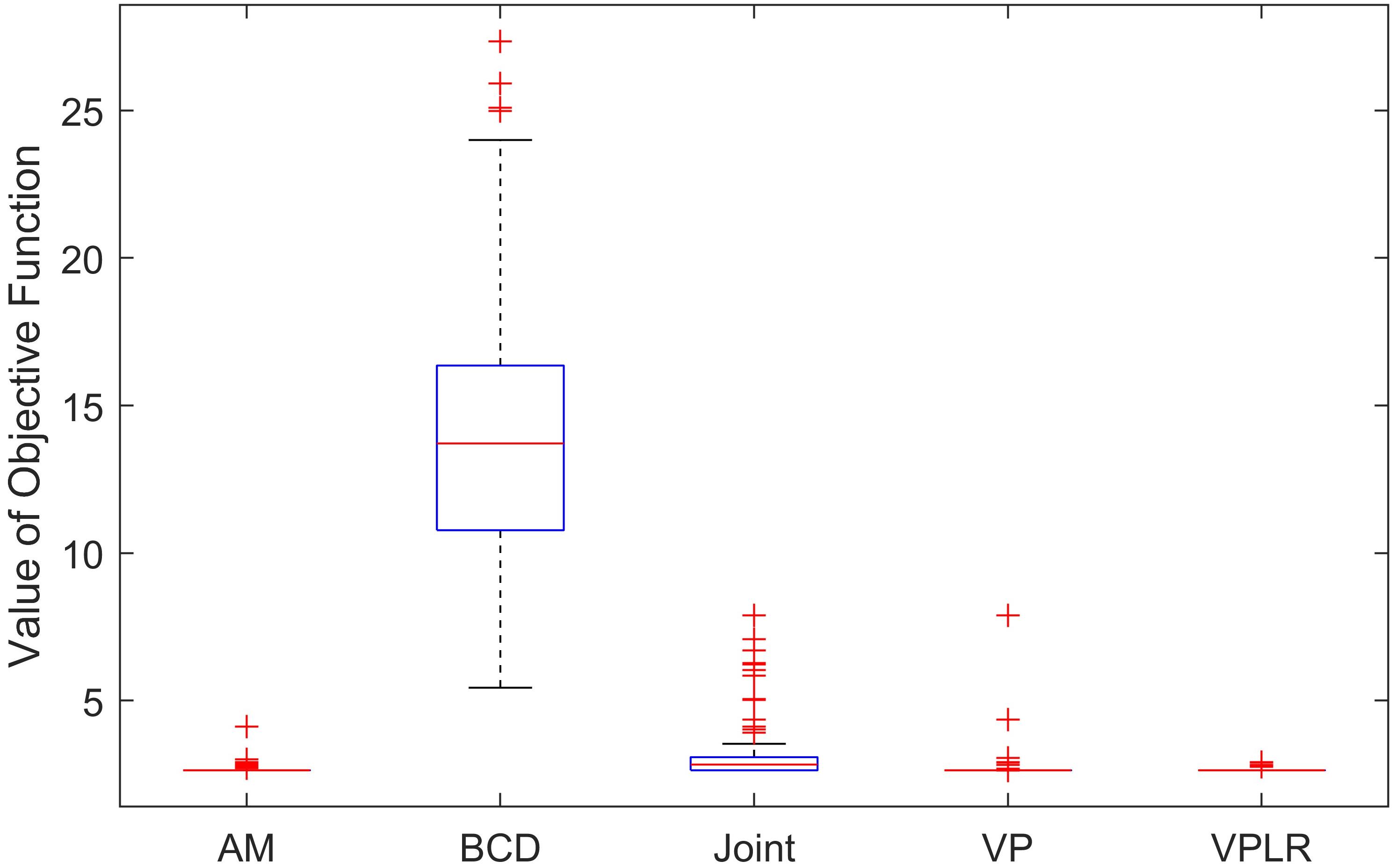}}
    \caption{Performance comparison of different algorithms with 100 sets of randomized tests.}
    \label{fig:4.35}
\end{figure}
\begin{figure}[htbp]
    \centering
        {\includegraphics[width=0.6\linewidth]{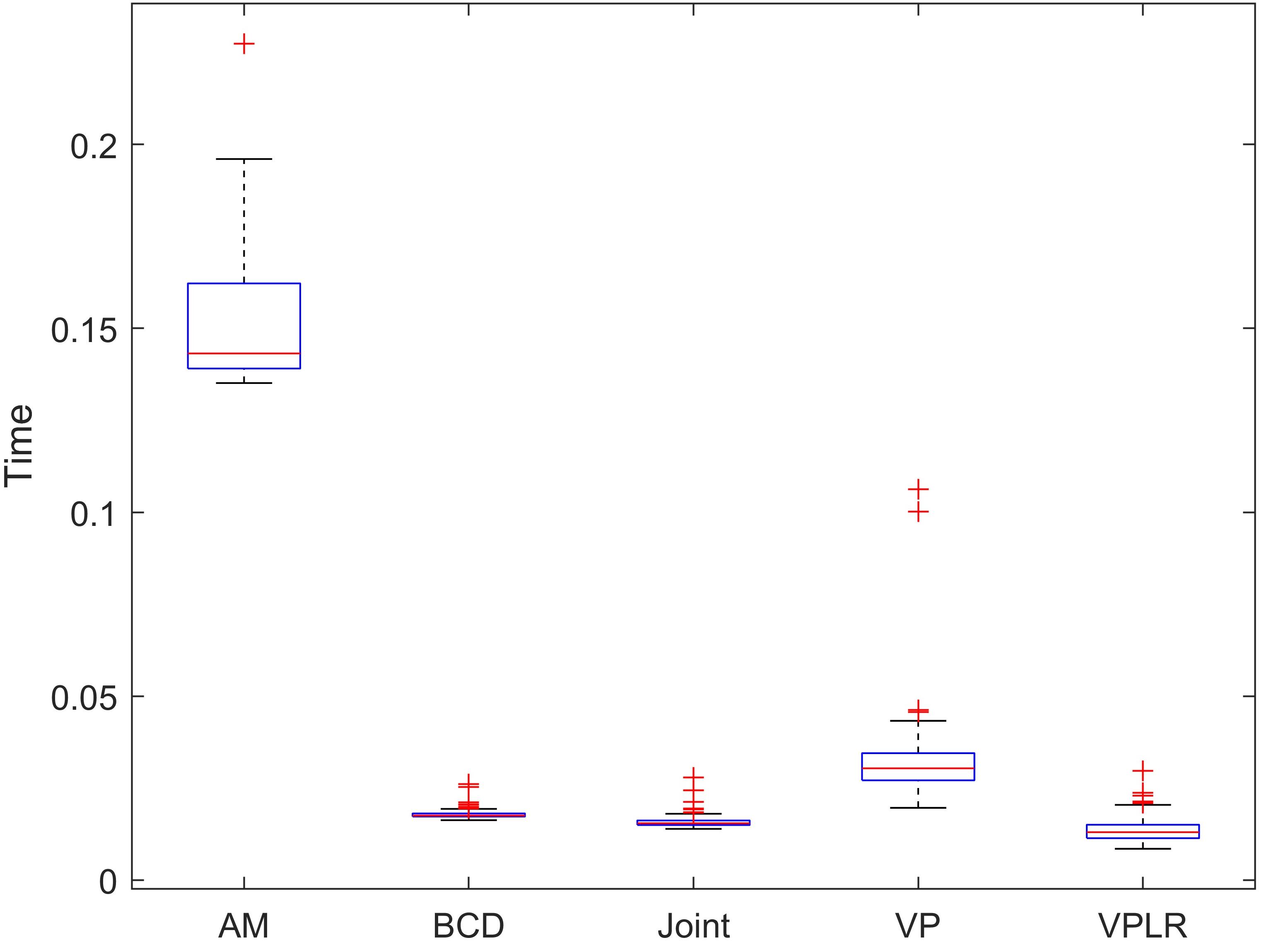}}
    \caption{{\color{black}Comparison of execution times across different algorithms using 100 sets of randomized tests.}}
    \label{fig:4.36}
\end{figure}

The comparison of the above experimental results demonstrates that the proposed VPLR algorithm, due to its ability to adeptly handle the coupling relationship between the parameters in the model and compensate for the effect of large residual on the Hessian matrix of the reduced objective function during the iteration process, is able to converge more quickly to a superior solution.

\subsection{Forecasting of Nonlinear Time Series Using the RBF-AR Model}
In this subsection, we {apply} the radial basis function network based autoregressive (RBF-AR) model to model a nonlinear time series. The RBF-AR($p, m, d$) model is a powerful statistical tool for system modeling \cite{peng2009nonlinear,peng2004rbf}, which can be expressed as:
\[\left.\left\{\begin{aligned}&y_t=\phi_0(\boldsymbol{x}_{t-1})+\sum_{i=1}^p\phi_i(\boldsymbol{x}_{t-1})y_{t-i}+e_t\\&\phi_i(\boldsymbol{x}_{t-1})=c_{i,0}+\sum_{j=1}^mc_{i,j}\exp\{-\lambda_j\|\boldsymbol{x}_{t-1}-\boldsymbol{z}_j\|^2\}\\&\boldsymbol{z}_k=(z_{k,1},z_{k,2},\cdots,z_{k,d})\\&\boldsymbol{x}_{t-1}=(y_{t-1},y_{t-2},\cdots,y_{t-d})\end{aligned}\right.\right.\]
where $p, m, d \in \mathrm{Z}^+$ denote the orders of the model; $\lambda_k$ and $\mathbf{z}_k$ $(k = 1,\cdots , m)$ represent the radius and centers of the RBF network, respectively; $\boldsymbol{x}_t$ encompasses certain explanatory variables within the system; $\phi(\cdot)$ denotes a nonlinear function and $e_t$ is the noise.

Next, we {use} the RBF-AR($8,1,3$) to fit the ozone column thickness data measured in the Arosa region of Switzerland. This dataset comprises 518 observations of the average thickness of the ozone columns. Following the procedure \textcolor{black}{outlined in} \cite{chen2018generalized}, we \textcolor{black}{preprocess} the raw data with a functional transformation to enhance the symmetry of the series and stabilize its variance{:}
\[ \tilde{y}_{i}=\ln(y_{i}-260).\]
The transformed data is shown in Fig. \ref{fig:4.4}.
\begin{figure}[htbp]
    \centering
        {\includegraphics[width=0.6\linewidth]{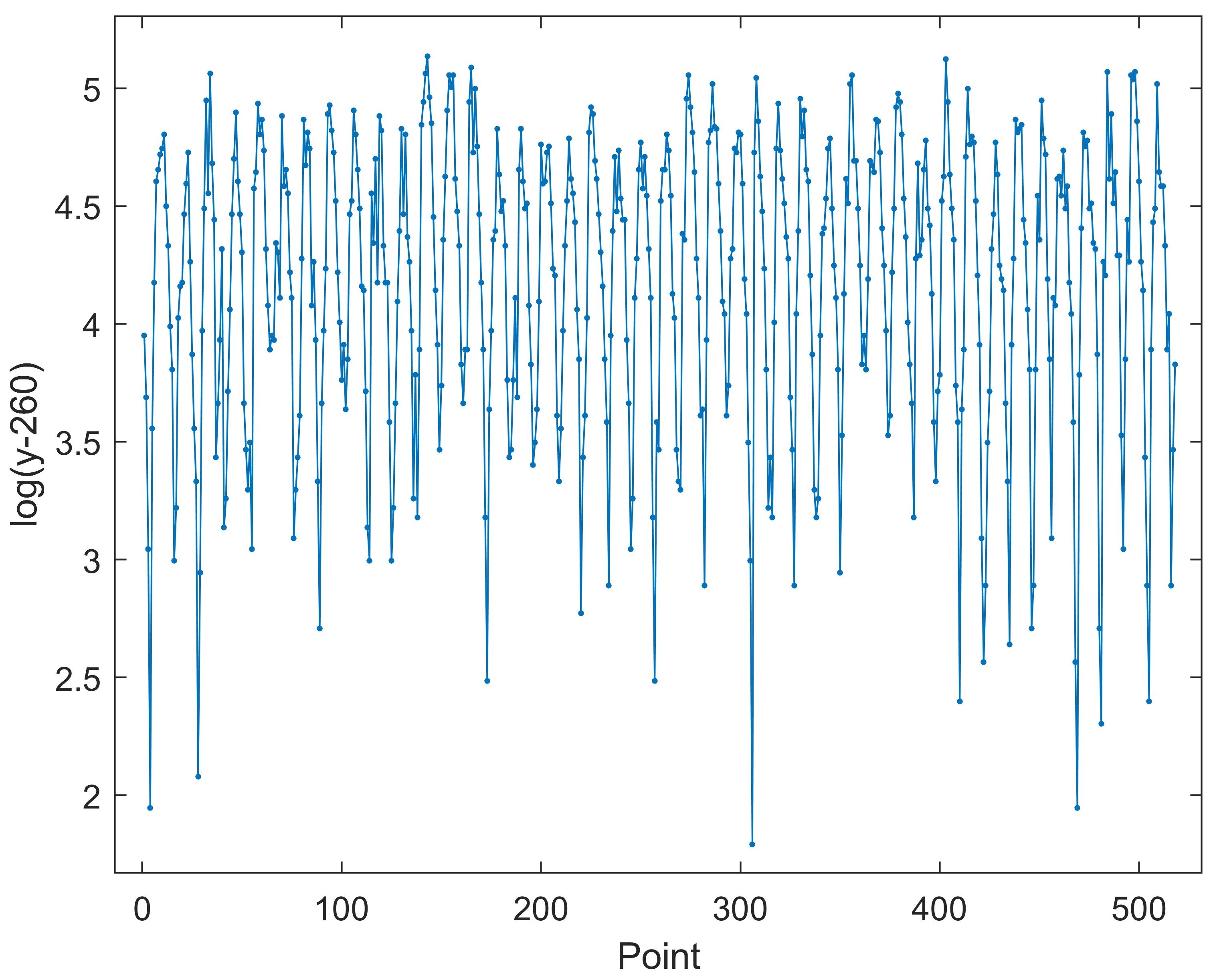}}
    \caption{Transformed ozone column data.}
    \label{fig:4.4}
\end{figure}
In this experiment, the {first} 450 data points are employed for model training, while the remaining data points  are served for \textcolor{black}{assessing the model's predictive performance.}  We adopt the Mean Squared Error (MSE) as the evaluation metric, which can be expressed as: 
\[\text{MSE}=\frac{1}{n}\sum_{i=1}^{n}(y_i-\hat{y}_i)^2,\]
where $\hat{y}_i$ is the output prediction.
\begin{figure}[htbp]
    \centering
        {\includegraphics[width=0.6\linewidth]{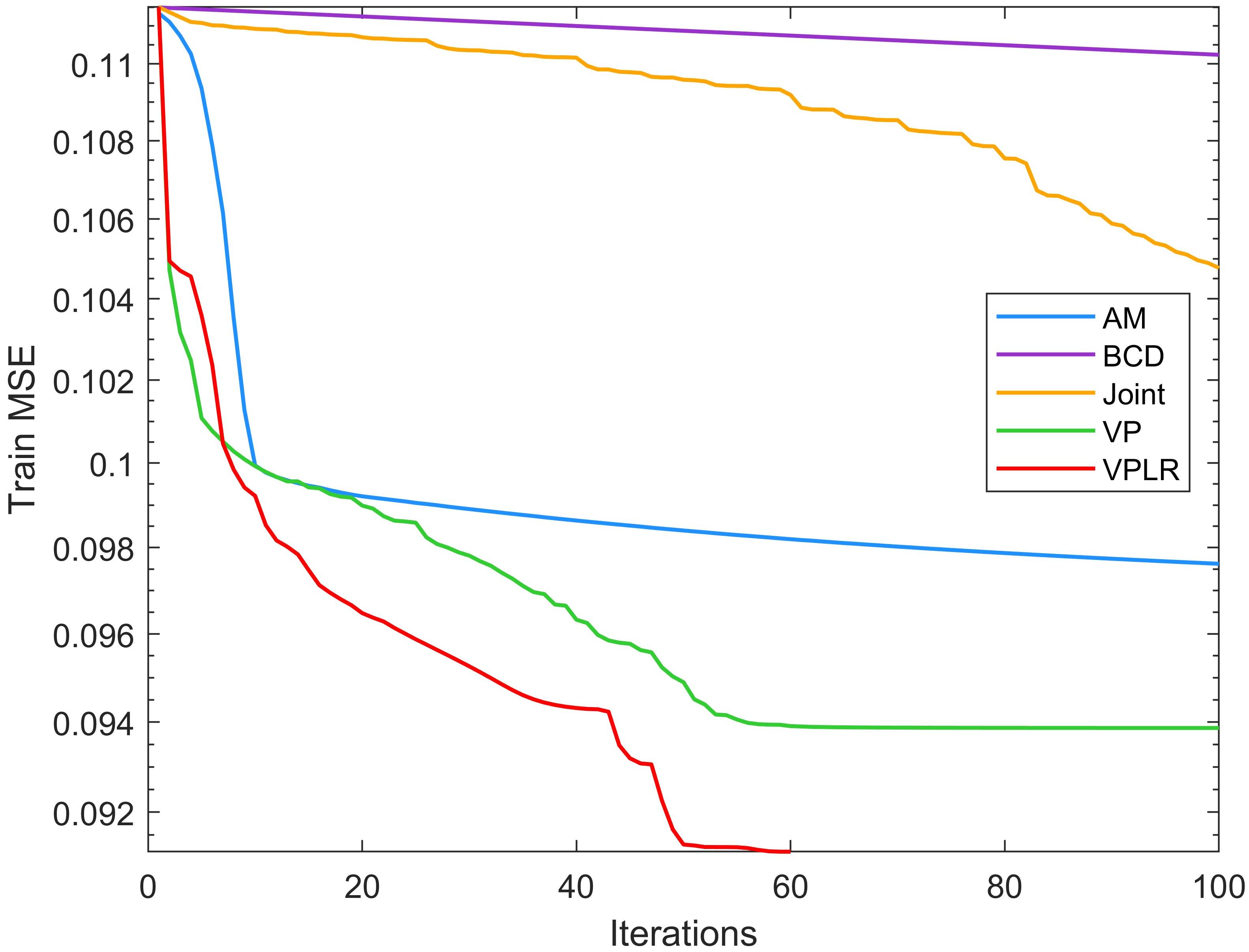}}
    \caption{Convergence process of different algorithms using the RBF-AR model to fit ozone column thickness data.}
    \label{fig:4.5}
\end{figure}
\begin{table}[htbp]
\caption{Comparison of training and testing errors of different algorithms for fitting ozone data sets using RBF-AR models.}
\setlength{\tabcolsep}{2.4mm}\renewcommand{\arraystretch}{1.8}
\begin{center}
{\begin{tabular}{cccccc}
\hline
\multicolumn{1}{l}{} & AM   & BCD    & Joint  & VP     & VPLR                        \\ \hline
Train MSE            & 0.098 & 0.110 & 0.105 & 0.094 & {$\mathbf{0.091}$} \\
Test MSE             & 0.178 & 0.214 & 0.200 & 0.170 & {$\mathbf{0.161}$} \\
Time             & 4.132 & 1.126 & 0.932 & 1.783 & {$\mathbf{0.857}$} \\
 \hline

\end{tabular}}
\end{center}
\label{tab:4.1}
\end{table}



As can be seen from Fig. \ref{fig:4.5}, the Joint, AM, and BCD algorithms converge slowly, particularly the Joint and BCD algorithms, as they ignore the separable structure of the RBF-AR model and the coupling relationship between the linear and nonlinear parameters {during} the optimization process. In contrast, the VPLR algorithm and the traditional VP algorithm can properly handle the coupling relationship between the parameters, enabling faster convergence. \textcolor{black}{Additionally,} by {compensating}  for the effect of large residual on the Hessian matrix of the reduced objective function, the VPLR algorithm not only converges more rapidly but also identifies more accurate solutions, thereby \textcolor{black}{enhancing} predictive performance. These are also clearly shown in Table \ref{tab:4.1}. 



\subsection{Fitting Concrete Data Using the RBF Network}
In the following part, we employ the radial basis function (RBF)
network
\[y=\sum_{k=1}^{m}c_{k}\exp(-r_{k}\left\|\boldsymbol{Y}-\boldsymbol{Z}\right\|^{2})\]
to model the concrete compressive strength data \cite{yeh1998modeling}.  \textcolor{black}{The dataset comprises} 1,030 observations, each containing eight features of concrete mix proportions and the corresponding response of concrete strength.

{Fig.} \ref{fig:4.6} shows the comparison of the convergence process of different algorithms for identifying RBF networks. From the figure, we can see that the VPLR algorithm \textcolor{black}{achieves} a faster convergence rate and identifies a solution that minimizes the fitting error \textcolor{black}{more effectively than other algorithms.} Tab. \ref{tab:4.2} presents the fitting errors, prediction errors \textcolor{black}{and execution times obtained} by different algorithms, providing a clearer comparison of their performance on both \textcolor{black}{training and testing sets.} Compared with other algorithms, the VPLR algorithm obtains the smallest training error, and superior predictive performance on the test set, {indicating} better generalization ability. 

These comparisons {demonstrate} the clear advantages of the  proposed algorithm in solving large-residual separable nonlinear optimization problems, with the condition of properly handling parameter coupling and compensating for the impact of residual. Moreover, the three experiments presented in this section reveal that the joint optimization algorithm exhibits slower local convergence than the VP algorithm, which is consistent with the theoretical analysis.

\begin{figure}[htbp]
    \centering
        {\includegraphics[width=0.6\linewidth]{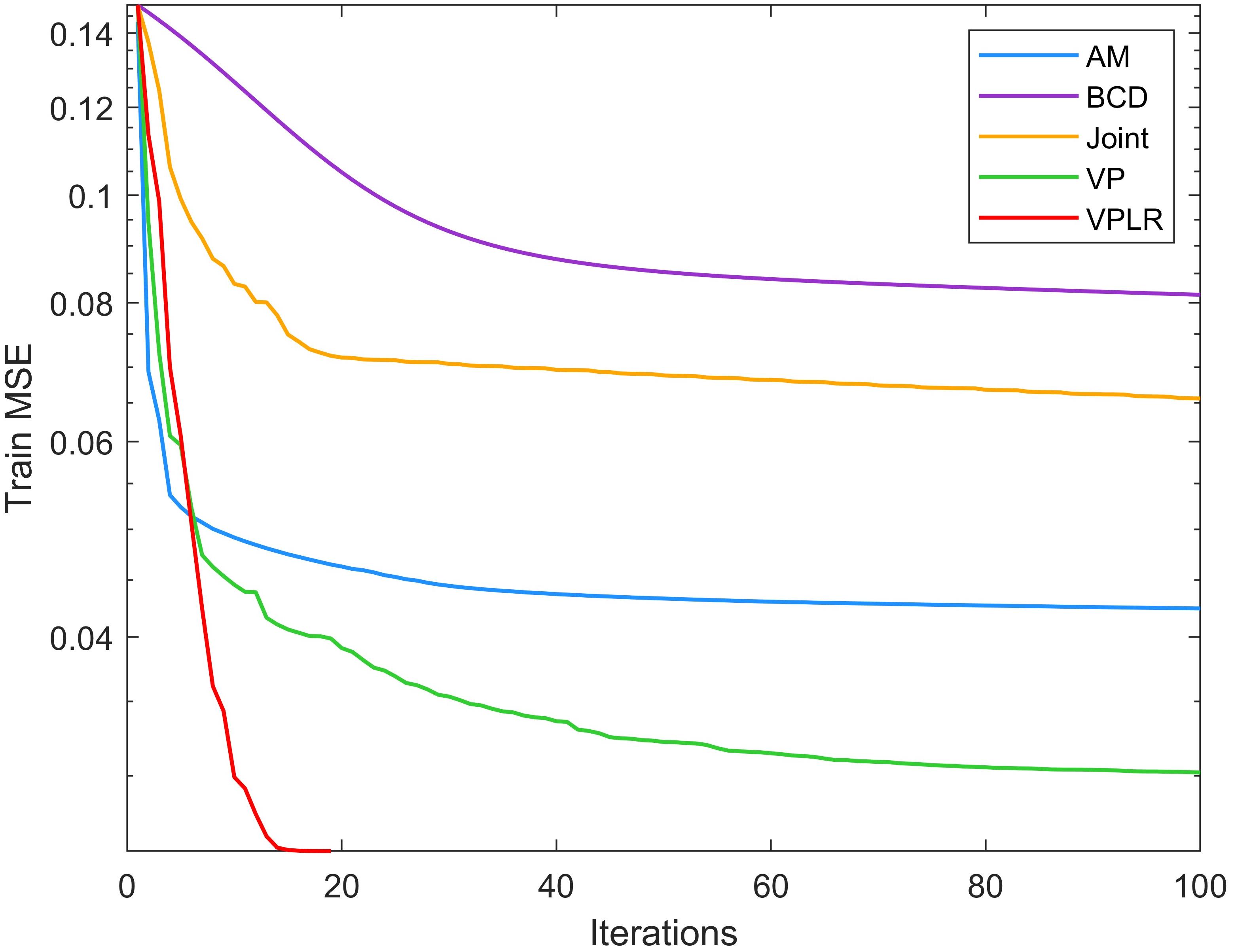}}
    \caption{Comparison of the convergence processes of different algorithms for concrete dataset.}
    \label{fig:4.6}
\end{figure}

\begin{table}[htbp]
\caption{Comparison of training and testing errors of different algorithms for Concrete data using RBF Networks.}
\setlength{\tabcolsep}{2.4mm}\renewcommand{\arraystretch}{1.8}
\begin{center}
{\begin{tabular}{cccccc}
\hline
\multicolumn{1}{l}{} & AM  & BCD  & Joint  & VP     & VPLR                        \\ \hline
Train MSE            & 0.042 & 0.081 & 0.066 & 0.030 & {$\mathbf{0.026}$} \\
Test MSE             & 0.059 & 0.103 & 0.071 & 0.085 & {$\mathbf{0.046}$} \\
Time             & 10.413 & 2.298 & 2.369 & 7.503 & {$\mathbf{1.785}$} \\
 \hline

\end{tabular}}
\end{center}

\label{tab:4.2}
\end{table}

\section{CONCLUSIONS}
\label{sec:5}
Separable nonlinear models are fundamental models in practical applications, such as system modeling, data analysis, and image processing. This paper investigates a highly efficient algorithm for identifying these models, namely the VP algorithm, and {establishes} a theoretical analysis framework for the impact of the reduced objective’s approximate Jacobian matrix (i.e., the approximate treatment of the coupling relationship between the linear and nonlinear parameters) on the convergence performance of the VP algorithm. A key theoretical contribution of this work is the demonstration that, under certain conditions, the Kaufman form of the VP algorithm can achieve similar local convergence rate as the Golub \& Pereyra form. Inspired by the theoretical analysis, we also design a refined VP algorithm for separable nonlinear optimization problem with large residual, denoted as VPLR, which improves the algorithm’s convergence by compensating the approximate Hessian matrix recursively while handling the coupling relationship between the model parameters. The theoretical research results of this paper fill the gap in this field and provide a strong support for understanding the mechanism of the VP algorithm and expanding its application scenarios.

\end{document}